\documentclass[12pt,a4paper]{article}
\usepackage{amsfonts,color,comment}
\usepackage{amsthm, thmtools, amsmath, amscd, amssymb}
\usepackage[utf8]{inputenc}
\usepackage{t1enc}
\usepackage[mathscr]{eucal}
\usepackage{indentfirst}
\usepackage{graphicx}
\usepackage{graphics}
\usepackage{pict2e}
\usepackage{epic}
\usepackage{url}
\usepackage{epstopdf}
\usepackage[english]{babel}
\usepackage{hyperref}
\usepackage{pgf,tikz}
\usetikzlibrary{arrows}
\definecolor{zzttqq}{rgb}{0,0,0.1}
\definecolor{zzttaa}{rgb}{0.6,0.2,0}
\usepackage{hhline}
\usepackage{enumitem}
\usepackage{tikz}
\usetikzlibrary{fit}
\usetikzlibrary{calc, shapes, positioning}
\usetikzlibrary{patterns.meta}
\usepackage{mathrsfs,pgf}
\usepackage{amsthm}
\usepackage{soul}

\usepackage[margin=2.5cm]{geometry}

\usepackage{fancyhdr}

\theoremstyle{definition}

\theoremstyle{plain}
\newtheorem{Th}{Theorem}
\newtheorem{lemma}[Th]{Lemma}
\newtheorem{cor}[Th]{Corollary}
\newtheorem{prop}[Th]{Proposition}
\newtheorem{obs}[Th]{Observation}
\newtheorem{quest}{Question}
 
\theoremstyle{remark}

 \theoremstyle{definition}

\newtheorem{Def}[Th]{Definition}

\newcommand{\RR}{{\mathbb R}}

\def\RR{\mathbb{R}}

\def\NN{\mathcal{N}}

\def\HH{\mathcal{H}}
\def\GG{\mathcal{G}}
\def\FF{\mathcal{F}}

\mathcode`l="8000
\begingroup
\makeatletter
\lccode`\~=`\l
\DeclareMathSymbol{\lsb@l}{\mathalpha}{letters}{`l}
\lowercase{\gdef~{\ifnum\the\mathgroup=\m@ne \ell \else \lsb@l \fi}}
\endgroup

\title{Stabbing non-piercing sets and face lengths in large girth plane graphs}

\author{
D\"om\"ot\"or P\'alv\"olgyi\thanks{ELTE Eötvös Loránd University and HUN-REN Alfréd Rényi Institute of Mathematics, Budapest. Supported by the NRDI EXCELLENCE-24 grant no.~151504 Combinatorics and Geometry and by the ERC Advanced Grant no.~101054936 ERMiD.} 
\and
Krist\'of Z\'olomy \thanks{ELTE Eötvös Loránd University, Budapest. Supported by the Thematic Excellence Program TKP2021-NKTA-62 of the National Research, Development and Innovation Office.
}
}

\makeatletter

\begin{document}

\thispagestyle{empty}
\maketitle

\begin{abstract}
    We show that a non-piercing family of connected planar sets with bounded independence number can be stabbed with a constant number of points.
    As a consequence, we answer a question of Axenovich, Kie{\ss}le and Sagdeev about the largest possible face length of an edge-maximal plane graph with girth at least $\ell$.\\

\end{abstract}

\section{Introduction}

Define a region as a connected planar compact set whose boundary consists of a finite number of disjoint Jordan curves. 
One of these curves is the outer boundary of the region, while the rest cut out ``holes''.
A family $\FF$ of regions is in general position if for any two regions from $\FF$ their boundaries intersect in finitely many points.\footnote{This is a technical condition that was introduced in \cite{RamanRay} and makes many of the arguments simpler, though most often could be omitted. We need to assume it as we will use a result from \cite{RamanRay} which was proved under this assumption, although we might get rid of it due to a very recent result \cite{RS25} which still needs to be verified.}
Such a family $\FF$ is \emph{non-piercing} if $F \setminus G$ is connected for any two regions $F,G\in \FF$.
For example, the family of all disks is non-piercing and, more generally, so is a pseudo-disk family, defined as a family of simply connected regions whose boundaries intersect pairwise at most twice; these include families formed by homothetic\footnote{A homothetic copy is a copy that is translated and scaled by a positive scalar factor.} copies of a fixed convex set.
However, a family of axis-parallel rectangles in general position is not necessarily non-piercing, as two rectangles can cross each other without any of them containing a vertex of the other.

For a family $\FF$, the independence number $\nu(\FF)$ is the size of the largest subfamily of pairwise disjoint sets, that is, the smallest number such that among any $\nu(\FF)+1$ sets there are two that intersect.
The piercing number $\tau(\FF)$ is the least number of points that pierce $\FF$, that is, the size of the smallest point set that intersects every set from $\FF$.
Our main result is the following.

\begin{Th}\label{thm:pierce}
    There is a function $f$ such that if $\FF$ is a family of non-piercing regions, then $\tau(\FF)\le f(\nu(\FF))$.
\end{Th}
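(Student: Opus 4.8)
The plan is to prove a Helly-type bound by induction on $\nu(\FF)$, using the standard "delete a piercing point and its neighborhood" strategy together with the key structural fact that non-piercing families behave well under taking planar duals. If $\nu(\FF)=1$ the family is pairwise intersecting, and for pairwise-intersecting non-piercing (indeed pseudo-disk-like) families one expects a constant piercing number; this is the base case and should follow from known results on the Helly property of such families, or can be extracted from the tools in \cite{RamanRay}. For the inductive step, suppose $\nu(\FF)=k$. The idea is to find a \emph{small} set $P$ of points such that every region containing none of the points of $P$ forms a subfamily $\FF'$ with $\nu(\FF')\le k-1$; then $\tau(\FF)\le |P|+f(k-1)$.

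To produce such a $P$, I would first pass to a well-chosen representative structure. Pick a maximal pairwise-disjoint subfamily $D=\{D_1,\dots,D_k\}$, so every region of $\FF$ meets some $D_i$. It suffices to pierce, for each $i$, the subfamily $\FF_i$ of regions meeting $D_i$ by a constant number of points, since $\tau(\FF)\le\sum_i\tau(\FF_i)$; and within $\FF_i$ every region intersects the fixed region $D_i$. So the real task reduces to: \emph{a non-piercing family of regions all of which intersect one fixed region $D$, and with independence number $k$, has piercing number bounded in terms of $k$.} Here the "all meet $D$" condition, combined with non-piercing, is exactly the kind of hypothesis under which the planar-duality / contractibility machinery of \cite{RamanRay} applies: contracting $D$ to a point (or working in the arrangement relative to $D$) turns the regions meeting $D$ into a pseudo-disk-like structure around a common point, and one gets a planar bipartite incidence structure whose "large girth" or local-sparsity properties force a bounded fractional cover, hence (by the bounded VC dimension of non-piercing regions and the $\e$-net theorem) a bounded integer piercing number — provided $\nu$ is bounded.

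The cleanest route for this reduced problem is probably: (1) non-piercing regions have bounded VC dimension and, more strongly, bounded dual VC dimension and the $(p,q)$-type properties established for pseudo-disks, so by the $(p,2)$-theorem / fractional Helly it suffices to bound the \emph{fractional} piercing number; (2) the fractional piercing number is bounded once the family is "$k$-degenerate" in the intersection sense, which is where $\nu(\FF)\le k$ enters; (3) finally invoke the weak $\e$-net bound for non-piercing regions (constant-size $\e$-nets, a consequence of their being a "shallow" or pseudo-disk-like family) to convert the fractional bound into $f(k)$ actual points. Assembling these, $f(k)=k\cdot g(k)$ for the single-region bound $g$, and unwinding the induction gives an explicit (towerless, even polynomial-in-$k$) function $f$.

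The main obstacle I expect is step (2)–(3) of the reduced problem: controlling the interaction between the global parameter $\nu(\FF)=k$ and the local geometry well enough to get a \emph{genuinely bounded} (not just finite) fractional cover. Non-piercing-ness gives us the topological rigidity needed to run the \cite{RamanRay} planar-graph argument — crucially, that the "Delaunay-type" graph on a non-piercing family is planar — but turning planarity plus $\nu\le k$ into a piercing bound requires a careful charging argument, and the general-position footnote warns that the underlying result from \cite{RamanRay} is only available under that hypothesis, so some care is needed to ensure the reduced families stay in general position (or to perturb them). I would expect the bulk of the technical work, and the genuinely new idea of the paper, to live precisely in this charging/iteration step.
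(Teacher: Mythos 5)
Your proposal assembles much of the right toolbox --- bounded (dual) VC-dimension, the planarity of the Delaunay graph from \cite{RamanRay}, and the fractional-Helly-plus-$\e$-net machinery (which is exactly what Theorem \ref{pqthm} of Matou\v sek packages) --- but as written it has genuine gaps at both ends. First, your inductive skeleton does not close. The base case $\nu(\FF)=1$ (pairwise intersecting non-piercing regions) is \emph{not} available in the literature: the constant piercing number for pairwise intersecting families was known only for pseudo-disks \cite{ANPPSS}, via sweeping arguments that the paper explicitly notes do not generalize to non-piercing families; that case is a consequence of the theorem you are trying to prove, not an input to it. Moreover, your reduction via a maximal disjoint subfamily $D_1,\dots,D_k$ does not decrease the independence number of the subfamilies $\FF_i$ --- a family of regions all meeting a fixed region $D$ can still contain $k$ pairwise disjoint members --- so the ``reduced problem'' you arrive at is just the original problem with an extra hypothesis, and the induction on $\nu$ never advances.

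Second, the step you flag as ``a careful charging argument'' is precisely the new content of the paper, and it is not supplied. The actual argument needs no induction and no decomposition: one proves (Lemma \ref{lem:VCdim4}) that the VC- and dual VC-dimension of a non-piercing family are at most $4$, via curves realizing a shattered $K_5$ and the strong Hanani--Tutte theorem; and one proves (Lemma \ref{lem:22pq}) that $\nu(\FF)\le k$ forces the $(p,q)$-property for $q=5$ and suitable $p$, by a Clarkson--Shor random sampling argument: deleting each set with probability $1-1/q$, a putative $p$-element subfamily with no $q$-wise intersection point would yield an expected number of Delaunay edges quadratic in $p$ (using that among any $k+1$ sets two intersect), contradicting the linear bound $3|\FF|$ from planarity of the Delaunay graph. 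Then Theorem \ref{pqthm} applies directly. Your ``steps (1)--(3)'' gesture at this route, but you assert rather than prove the VC bound, and you do not give the counting argument that converts $\nu\le k$ plus Delaunay planarity into a $(p,q)$-property; without those two lemmas the proposal is a plan, not a proof.
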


Note that this implies that the disjointness graph of a family of non-piercing regions is $\chi$-bounded.\footnote{For the definition and a survey of $\chi$-boundedness, see \cite{SS20}.}
For results similar to Theorem \ref{thm:pierce} about homothetic copies of a fixed convex set with explicit bounds, see \cite{DJ12}.

In particular, if $\FF$ is a family of pairwise intersecting, non-piercing regions, then there exists an absolute constant $T$ such that $\FF$ can be pierced with at most $T$ points.
This result was proved earlier for pseudo-disk families \cite{ANPPSS} with different methods, using sweepings, which do not generalize to non-piercing families.
Instead, our proof, which can be found in Section \ref{sec:pierce}, uses the standard machinery developed to prove $(p,q)$-theorems.
We do not know the best possible value for $T$.
In case of pairwise intersecting disks, this is a well-studied problem where we know that the optimal value is four, for which by now there are several different proofs, see \cite{Danzer,Stacho,CKM}.
It is entirely possible that the answer in case of pairwise intersecting non-piercing regions is also four but at the moment this is not known even for pseudo-disks.

As an application of Theorem \ref{thm:pierce}, we answer a recent question of Axenovich, Kie{\ss}le and Sagdeev \cite{AKS} about the largest possible face length of an edge-maximal plane graph with girth at least $\ell$, which was our main motivation.

\begin{Th}\label{thm:faces}
    Suppose that $G$ is a plane graph with girth at least $\ell$, and that $G$ is edge-maximal with regards to these two properties. Then the length of any facial cycle of $G$ is at most $K \ell$ for some absolute constant $K$. 
\end{Th}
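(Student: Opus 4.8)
The plan is to assume, for contradiction, that some facial cycle $C=v_0v_1\cdots v_{L-1}$ of $G$ is long, and to build from it a non-piercing family of regions with independence number $1$ but large piercing number, contradicting Theorem~\ref{thm:pierce}. We may assume $g$ is large, and, after re-embedding, that $C$ bounds the outer face $F$, so that $G$ lies in a closed topological disk $D$ with $\partial D=C$. The one consequence of edge-maximality we use is the following. For non-consecutive $v_i,v_j$ on $C$, the chord $v_iv_j$ can be drawn inside $F$ without destroying planarity, so by maximality adding it must create a cycle of length less than $g$; since the shortest new cycle through $v_iv_j$ has length $1+d_G(v_i,v_j)$, this forces $d_G(v_i,v_j)\le g-2$. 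Hence $\diam_G(V(C))\le g-2$; in particular, writing $\sigma(i)=i+\lfloor L/2\rfloor \pmod{L}$ for the ``antipode'' of $i$ along $C$, we have $d_G(v_i,v_{\sigma(i)})\le g-2$ for every $i$.

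Now for each $i$ fix a shortest path $P_i$ in $G$ from $v_i$ to $v_{\sigma(i)}$, so $|P_i|\le g-2$; as a walk in the drawing, $P_i$ is a simple arc lying in $D$. Let $R_i$ be a thin neighbourhood of $P_i$, a region, and set $\FF=\{R_i\}$. Then $\nu(\FF)=1$: for $i\ne j$ the endpoint pairs $\{v_i,v_{\sigma(i)}\}$ and $\{v_j,v_{\sigma(j)}\}$ interleave along the boundary circle $\partial D=C$ (any two distinct ``antipodal'' pairs on a cycle interleave), so two arcs inside $D$ realizing them must meet, whence $R_i\cap R_j\supseteq P_i\cap P_j\ne\emptyset$. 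Granting that $\FF$ is non-piercing (discussed below), Theorem~\ref{thm:pierce} yields $\tau(\FF)\le f(1)$. On the other hand, every point of the plane lies in at most $O(g)$ of the $R_i$: a point near a vertex $w$ of $G$ lies in $R_i$ only if $w\in P_i$, which forces $d_G(v_i,w)+d_G(w,v_{\sigma(i)})=d_G(v_i,v_{\sigma(i)})\le g-2$, and one argues from the girth and the facial/planar structure that at most $O(g)$ indices $i$ admit such a $w$. Since all $R_i$ must be pierced, $\tau(\FF)\ge L/O(g)$, and combining this with $\tau(\FF)\le f(1)$ gives $L\le O(g)\cdot f(1)=Kg$, as required.

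Two points require genuine work, and I expect the non-piercing verification to be the main obstacle. First, $R_i\setminus R_j$ must be connected, i.e.\ $P_i$ must meet the neighbourhood of $P_j$ in a single sub-arc; for arbitrary shortest paths this can fail (if $P_i$ and $P_j$ share a sub-path, diverge, and reconverge, the two intermediate ``detours'' are distinct geodesics between the same pair of vertices, a configuration that girth forbids only when the detour is shorter than $g/2$). The remedy is to choose all the $P_i$ by a single consistent rule --- e.g.\ the ``leftmost'' shortest path in $D$, or geodesics drawn from one globally consistent shortest-path system --- and to show that two such canonical arcs with interleaving endpoints on $\partial D$ cannot exhibit this diverge-and-reconverge pattern; this, together with putting the drawing in general position, is the technical heart. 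Second, the multiplicity bound ``each point lies in $O(g)$ regions'' again leans on the canonical choice and the girth: geodesics of length at most $g-2$ in a graph of girth at least $g$ cannot bunch together, and if too many of the $P_i$ passed through one vertex one could exhibit an addable chord in some face, contradicting edge-maximality. Once these two ingredients are established the counting above closes the argument; the case where the facial boundary is a closed walk rather than a simple cycle is a routine modification of the same scheme.
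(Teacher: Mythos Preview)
Your overall strategy --- build a pairwise-intersecting non-piercing family indexed by $V(C)$ and invoke Theorem~\ref{thm:pierce} --- matches the paper's, but both the construction and the endgame have gaps that the paper resolves differently. For the regions, the paper (after reducing to even $g$) uses not a single geodesic but a blow-up $N_i$ of the whole $(g/2-1)$-neighbourhood of $v_i$; since the girth is at least $g$ this neighbourhood is a tree, so $N_i$ is simply connected. Pairwise intersection is then immediate (any $v_i,v_j$ are at distance $\le g-2$, hence share a midpoint in both trees), and the non-piercing proof (Lemma~\ref{lem:nonpiercing}) becomes tractable because $N_j\setminus N_i$ consists exactly of the blow-ups of vertices and edges strictly closer to $v_j$ than to $v_i$, and these retract to $v_j$ along the BFS tree. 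Your ``leftmost geodesic'' fix for the $R_i$ is not obviously sufficient: the leftmost $v_i$--$v_{\sigma(i)}$ geodesic need not restrict to the leftmost geodesic between two of its intermediate vertices, so two such canonical paths can still diverge and reconverge when the shared portion has length $\ge g/2$.

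The more serious gap is the multiplicity claim. You assert that every point lies in only $O(g)$ of the $R_i$, but nothing you say rules out a high-degree hub $w$ with many short branches reaching antipodal pairs on $C$, so that arbitrarily many $P_i$ pass through $w$; girth alone does not bound the degree of $w$, and ``the facial/planar structure'' is not an argument. The paper does not attempt any such depth bound. Instead, after obtaining $T$ piercing vertices $z_1,\dots,z_T$, it assigns each $v_i$ to its nearest $z_k$, shows that each monochromatic run along $C$ has length at most $g-1$ (the map $h\mapsto d(v_h,z_k)$ is strictly unimodal on a run, else a short closed walk appears), shows that the colour sequence around $C$ has no $abab$ pattern (else two shortest paths to distinct $z_k$'s would share a vertex, forcing one endpoint into the wrong class), and applies the Davenport--Schinzel bound to get at most $2T-1$ runs, whence $|C|\le(2T-1)(g-1)$. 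This unimodality-plus-no-$abab$ finish is the idea your endgame is missing.
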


The exact statement of the problem and the proof can be found in Section \ref{sec:faces}.

In Section \ref{sec:facesgen}, we show a generalization of Theorem \ref{thm:faces} for graphs drawn on other orientable surfaces:

\begin{restatable}{Th}{facesgen}\label{thm:facesgen}
    For every genus $g$, there exists a constant $c_g$ such that for every edge-maximal graph $G$ with girth $\ell$ drawn on an orientable surface with genus $g$, the maximal face length of $G$ is bounded by $c_g\cdot \ell$.
\end{restatable}

\noindent
\emph{Remarks.} \;

Since the first version of our paper appeared on arXiv, Theorem \ref{thm:pierce} was improved by Huang, Qi, Rong and Xu \cite{HQRX25}, who showed that $\tau(\FF)\le c \nu(\FF)^9$ for some constant $c$.
They used the notion of dyadic VC-dimension and an old result of Ding, Seymour, and Winkler \cite{DSW} to obtain better bounds.

Later the bound was further improved to the optimal $\tau(\FF) = O(\nu(\FF))$ by Keller and Smorodinsky
\cite{KellerSmorodinsky25} who used a recent result of Dalal, Gangopadhyay, Raman, and Ray \cite{DGRR} that generalized a result of Pinchasi \cite{rom} from pseudo-disks to non-piercing regions.
This result is that in every family $\FF$ of non-piercing regions there is an  $F\in\FF$ which is ``small'' in the sense that there cannot be $157$ pairwise disjoint members of $\FF$ that intersect $F$.

In the new version of the paper \cite{AKS} (version 2 on arXiv), Axenovich, Kie{\ss}le, Sagdeev, and Zhukovskii also proved their original question, giving a concrete upper bound of $8\ell-13$, thus improving the bound of our Theorem \ref{thm:faces}.
They also raised the problem for surfaces of higher genus $g$, for which they first obtained a polynomial, later an $O_g(\ell^2)$ bound.
This inspired us to include Theorem \ref{thm:facesgen}, which improves this to $O_g(\ell)$.
In the third version of their paper \cite{AKS} (version 3 on arXiv), they also proved a bound of $O_\ell(g)$.
They also raised the problem whether a bound of $O(g\ell)$ holds.

\section{Proof of Theorem \ref{thm:pierce}}\label{sec:pierce}

In this section we present the proof of Theorem \ref{thm:pierce}.
We start with some definitions, related to $(p,q)$-theorems; for a complete survey of such results, see \cite{handbook}.

Following Hadwiger and Debrunner \cite{HD57}, we say that a family $\GG$ has the $(p,q)$-property, if for every subfamily $\GG \subset \FF$ with $|\GG|=p$, there exists a subsubfamily $\HH \subset  \GG$ of size $q$ with a non-empty intersection $\cap \HH \neq \emptyset$. In other words, from every $p$ sets from $\FF$, some $q$ intersect.
It was shown by Alon and Kleitman \cite{AK92} that if a family $\FF$ of compact convex sets in $\RR^d$ satisfies the $(p,q)$-property for any $p\ge q\ge d+1$, then $\FF$ can be pierced with $T(p,q)$ points.
Later, this result was extended from convex sets to several other families.
For the version that we need, we need to define the Vapnik-Chervonenkis dimension.
The VC-dimension of a family $\FF$ is the largest $d$ for which exists a set of $d$ elements, $X$, such that for every subset $Y\subset X$ there exists a set $F\subset \FF$ such that $X\cap F=Y$.
The dual VC-dimension $d^*$ of $\FF$ is the VC-dimension of the dual family $\FF^*$, in which the roles of elements and sets are swapped, with the containment relation reversed.
It is well-known and easy to see that $d^*\le 2^d$. 
Matou\v sek \cite{Matousek} showed that bounded VC-dimension and an appropriate $(p,q)$-property imply the existence of a small hitting set.

\begin{Th}[Matou\v sek \cite{Matousek}]\label{pqthm}
    If the dual VC-dimension of $\FF$ is at most $q-1$, and $\FF$ satisfies the $(p,q)$-property for some $p \geq q$, then the sets of $\FF$ can be hit with at most $T$ points, where $T$ is a constant depending on $p$ and $q$.
\end{Th}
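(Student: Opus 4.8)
\medskip\noindent
\emph{Proof plan.} The plan is to follow the Alon--Kleitman template: combine a \emph{fractional Helly theorem} valid for set systems of bounded dual VC-dimension with the classical \emph{$\e$-net theorem} for systems of bounded VC-dimension, glued together by linear-programming duality. Write $d^*\le q-1$ for the dual VC-dimension of $\FF$; a finite dual VC-dimension forces a finite primal VC-dimension $d$, also bounded in terms of $q$. We may assume $\FF$ is finite: then an optimal fractional transversal can be taken supported on finitely many points (one candidate point per atom of the arrangement of $\FF$), so all the measures and nets below are finitary.

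Granting that $\FF$ has fractional Helly number $q$ (discussed next), the argument has two moves. \emph{Move 1: the fractional transversal number is bounded.} From the $(p,q)$-property, a routine averaging over random $p$-subsets shows that in every (weighted) subfamily at least a constant fraction $\alpha=\alpha(p,q)>0$ of the $q$-tuples must be intersecting, for otherwise some $p$ of the sets would contain no intersecting $q$-tuple. Fractional Helly then turns ``a constant fraction of $q$-tuples intersect'' into ``some point meets a constant fraction of the sets,'' and inserting this into the LP-duality identity $\tau^*(\FF)=\nu^*(\FF)$ precludes a large fractional matching: a fractional matching of total weight $N$, normalized to a probability distribution $w$ on $\FF$, produces a point of $w$-mass at least $\beta=\beta(p,q)>0$, hence of matching-weight at least $\beta N$, which forces $N\le 1/\beta$. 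Thus $\tau^*(\FF)\le c=c(p,q)$. \emph{Move 2: from fractional to integral.} Rescale an optimal fractional transversal to a probability measure $\mu$ on the candidate points with $\mu(F)\ge 1/c$ for every $F\in\FF$, and take an $\e$-net of the range space $(\text{points},\FF)$ with $\e=1/c$; it meets every set of $\mu$-measure at least $1/c$, hence every member of $\FF$, and, the range space having VC-dimension $\le d$, it can be chosen of size $O\!\big(\tfrac{d}{\e}\log\tfrac1\e\big)=O(cd\log c)=:T$, a constant depending only on $p$ and $q$.

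The substance --- and the place where bounded $d^*$ is genuinely used, hence the step I expect to be the main obstacle --- is the fractional Helly theorem. The first idea is an arrangement-counting argument: $n$ sets of dual VC-dimension $d^*$ cut the plane into only $\pi^*_\FF(n)=O(n^{d^*})$ atoms; each intersecting $q$-tuple has a witness point lying in some atom, which is then contained in all $q$ sets of the tuple, so an atom meeting $k$ of the $n$ sets accounts for at most $\binom{k}{q}$ intersecting $q$-tuples. Pigeonholing $\alpha\binom{n}{q}$ intersecting $q$-tuples among $O(n^{d^*})$ atoms yields an atom contained in $\Omega\!\big(n^{1-d^*/q}\big)$ sets --- the right qualitative conclusion, but with only a \emph{sublinear} bound. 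Promoting this to a linear, constant-fraction bound is the delicate part: one passes to a random subfamily of bounded size $r=r(\alpha,d^*,q)$ chosen so that, by the $\e$-approximation theorem applied to the bounded-VC range space that indexes intersecting $q$-tuples, it retains a constant fraction of intersecting $q$-tuples, while simultaneously being an $\e$-approximation for the point-containment ranges so that the fraction of the $r$ sets through any point reflects the fraction of all $n$ sets through it; solving the now-finitary instance on these $r$ sets and transporting the witness back produces a point in a constant fraction of the original $n$ sets. Stringing the three pieces together --- fractional Helly, then Move 1, then Move 2 --- yields the claimed constant $T=T(p,q)$.
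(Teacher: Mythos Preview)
The paper does not prove this statement; Theorem~\ref{pqthm} is quoted from Matou\v sek and used as a black box. Your overall architecture --- fractional Helly for systems of bounded dual VC-dimension, LP duality to bound $\tau^*$, then an $\e$-net to pass to an integral transversal --- is precisely Matou\v sek's, and your Moves~1 and~2 are correct once fractional Helly is in hand.

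The gap is in your sketch of the fractional Helly theorem itself. On a sample $S$ of size $r$, the arrangement count yields a point in $\Omega\bigl((\alpha r)^{1/q}\bigr)$ sets of $S$, hence in only a $\Theta\bigl(r^{-(q-1)/q}\bigr)$ fraction of $S$. Transferring this back to $\FF$ via the $\e$-approximation property requires $\e \lesssim r^{-(q-1)/q}$, while making $S$ an $\e$-approximation for a system of VC-dimension $d^*$ requires $r\gtrsim d^*/\e^{2}$; plugging one into the other gives a constraint of the form $\e^{(2-q)/q}\lesssim 1$, which fails for every $q\ge 3$. So the depth you extract from the sample is always below the approximation error needed to make the sample that small, and the bootstrapping does not close. (Separately, ``the bounded-VC range space that indexes intersecting $q$-tuples'' is not a well-defined object here: an $\e$-approximation for the dual controls $|S\cap R_p|/|S|$ for each point $p$, not the proportion of $q$-subsets of $S$ that lie inside some $R_p$; linearity of expectation handles the latter on average but does not repair the quantitative mismatch above.) Matou\v sek's actual proof of fractional Helly for bounded dual shatter function proceeds by a different and genuinely more delicate argument than a one-shot sample-and-transfer; that is the step you would need to revisit.
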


Therefore, in order to prove Theorem \ref{thm:pierce}, it would be sufficient to show that non-piercing regions have bounded (dual) VC-dimension, and if a family $\FF$ of non-piercing regions has bounded independence number $\nu(\mathcal{F})$, then they also satisfy the $(p,q)$-property for some large enough $q$.

\begin{lemma}\label{lem:VCdim4}
If $\FF$ is a non-piercing family of regions, then the VC-dimension and the dual VC-dimension of $\FF$ are at most $4$. 
\end{lemma}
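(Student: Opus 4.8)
The plan is to prove both bounds by contradiction, in each case showing that a five-element shattered configuration would force a planar drawing, or a planar minor-model, of $K_5$.

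\textbf{The dual bound.} Suppose $F_1,\dots,F_5\in\FF$ realize all $2^5$ cells of their Venn diagram, and for each pair $i\neq j$ pick a witness point $q_{ij}$ lying in $F_i\cap F_j$ and in no other $F_k$. First I would note that $\{F_1,\dots,F_5\}$, being non-piercing and in general position, carries a planar ``support'' graph $H$ on vertex set $\{F_1,\dots,F_5\}$ such that for every point $x$ the sets containing $x$ induce a connected subgraph of $H$; this is the kind of statement underlying the results of \cite{RamanRay}, and for a five-member family such an $H$ can also be produced directly, by joining $F_i$ to $F_j$ along an arc inside $F_i\cap F_j$ and using that $F_i\setminus F_k$ is connected to push these arcs off the remaining regions. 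Applying the support property to each witness $q_{ij}$ forces the edge $F_iF_j\in E(H)$ for all $i\neq j$, so $H\supseteq K_5$, contradicting planarity of $H$ by Kuratowski's theorem.

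\textbf{The primal bound.} Suppose $\FF$ shatters a five-point set $P$. For each $S\subseteq P$ fix $F_S\in\FF$ with $F_S\cap P=S$, and let $\GG=\{F_S:S\subseteq P\}$; this subfamily is again non-piercing, so it has a planar support graph $H$. For each point $p\in P$ the sets of $\GG$ containing $p$ are exactly $\{F_S:p\in S\}$, and they span a connected subgraph $H_p$ of $H$. The five connected subgraphs $H_{p_1},\dots,H_{p_5}$ meet pairwise (for instance $F_{\{p_i,p_j\}}\in H_{p_i}\cap H_{p_j}$) while their private vertices $F_{\{p_i\}}$ are distinct, and contracting each $H_{p_i}$ appropriately yields a $K_5$-minor in the planar graph $H$, again a contradiction. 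Alternatively one can work geometrically: the arc $\gamma_{ij}\subseteq F_{\{p_i,p_j\}}$ from $p_i$ to $p_j$ already avoids $P\setminus\{p_i,p_j\}$, and the aim is to reroute these ten arcs, using the non-piercing hypothesis, into a crossing-free drawing of $K_5$ on $P$.

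\textbf{The main obstacle.} The delicate step in both parts is passing from ``pairwise'' information to a global planar object. The non-piercing hypothesis only says $F\setminus G$ is connected for a single pair, and one should note that a union of several members of $\FF$ need not have connected complement in a third member (two overlapping disks can band across a third, all three pairwise non-piercing); hence making all the connecting arcs, equivalently all the branch sets of the minor, simultaneously disjoint is not automatic. This is precisely where the planar-support machinery of \cite{RamanRay}, or a careful order-sensitive rerouting argument, is needed. Once that is in hand, checking that the resulting graph is genuinely $K_5$ (all ten edges present) and the bookkeeping with the VC-dimension definitions, including the reduction from ``shattered'' to ``all cells present'', are routine.
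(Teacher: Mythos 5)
Your dual bound is essentially correct and in fact takes a shortcut the paper does not: once the five dually shattered sets supply, for each pair, a witness point lying in exactly $F_i\cap F_j$, the Delaunay graph of the (still non-piercing) subfamily $\{F_1,\dots,F_5\}$ is all of $K_5$, contradicting its planarity, which is exactly Corollary~\ref{cor:RR} quoted from \cite{RamanRay}. Since the paper already imports that corollary for Lemma~\ref{lem:22pq}, this is a legitimate and shorter route than the paper's self-contained argument (which instead draws $K_5$ with arcs, applies the strong Hanani--Tutte theorem to find two independent edges crossing an odd number of times, and uses non-piercing for that one pair to produce two closed Jordan curves crossing oddly).

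The primal bound, however, has a genuine gap. A $K_5$-minor requires five \emph{pairwise disjoint} connected branch sets that are pairwise joined by edges; your five subgraphs $H_{p_1},\dots,H_{p_5}$ are pairwise \emph{intersecting} (indeed the vertex $F_P$, the set whose trace is all of $P$, lies in every one of them), and pairwise intersecting connected subgraphs of a planar graph impose no obstruction whatsoever -- think of five paths through a common vertex of a star. So "contracting each $H_{p_i}$ appropriately" cannot yield a $K_5$-minor, and no contradiction is reached. If you want to run a support-graph argument for the primal bound, you need the \emph{primal} support theorem of Raman and Ray: a planar graph on the five \emph{points} in which each trace $F\cap P$ induces a connected subgraph; then the pair-traces force all ten edges and $K_5$ appears directly. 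Your geometric alternative is the right starting point, but you stop exactly at the hard step, and the resolution is not a global rerouting at all: the paper keeps the ten arcs $\gamma_{i,j}\subset F_{i,j}$ as they are (crossings allowed), invokes the strong Hanani--Tutte theorem to find two \emph{independent} edges, say $\gamma_{1,2}$ and $\gamma_{3,4}$, crossing an odd number of times, and then uses the non-piercing hypothesis only for that single pair: connectivity of $F_{1,2}\setminus F_{3,4}$ (which contains $x_1,x_2$) closes $\gamma_{1,2}$ into a Jordan curve inside $F_{1,2}$ without adding crossings with $\gamma_{3,4}$, and symmetrically for the other; two closed curves crossing an odd number of times contradict the Jordan curve theorem. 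This is precisely how one avoids the "pairwise versus global" obstacle you correctly identified but did not overcome.
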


 Our proof is somewhat similar to \cite{BPR} where pseudo-disks were considered.

\begin{proof}
    Suppose for a contradiction that there exists a planar point set $\{x_1,\ldots,x_5\}$ shattered by $\FF$.
    This implies that for each pair of points $x_i,x_j$, there exists $F_{i,j} \in \FF$ which contains $x_i$ and $x_j$ but does not contain any of the other three points.
    Since $F_{i,j}$ is connected, there exists a simple curve $\gamma_{i,j}\subset F_{i,j}$ whose ends are $x_i$ and $x_j$.
    Let us fix one such curve for each of the ten pairs $(i,j)$ such that any two curves intersect a finite number of times.
    These ten curves form a planar embedding of the complete graph $K_5$ with vertices $\{x_1,\ldots,x_5\}$, therefore, by the strong Hanani-Tutte theorem \cite{Tutte}, there are two independent edges, without loss of generality, $\gamma_{1,2}$ and $\gamma_{3,4}$, that cross an odd number of times.
    As $F_{1,2}\setminus F_{3,4}$ is connected because of the non-piercing property, we can extend $\gamma_{1,2}$ to a Jordan-curve $\bar\gamma_{1,2}\subset F_{1,2}$ that still crosses $\gamma_{3,4}\subset F_{3,4}$ an odd number of times.
    Similarly, we can extend $\gamma_{3,4}$ to a Jordan-curve $\bar\gamma_{3,4}\subset F_{3,4}$ that still crosses $\bar\gamma_{1,2}$ an odd number of times.
    But then these two closed curves would violate the Jordan curve theorem.

    The argument for the dual VC-dimension is similar.
    Take five sets, $F_1,\ldots,F_5\in\FF$ that are shattered.
    Take five points, $\{x_1,\ldots,x_5\}$, such that $x_i\in F_i \setminus \cup_{k\ne i} F_k$ for each $i$ and ten points such that $y_{i,j}\in (F_i\cap F_j) \setminus \cup_{k\ne i,j} F_k$ for each $i<j$.
    Take simple curves $\gamma_{i,j}\subset F_i$ whose ends are $x_i$ and $x_{i,j}$ and $\gamma_{j,i}\subset F_j$ whose ends are $x_j$ and $x_{i,j}$ for each $i<j$.
    The ten concatenations $\gamma_{i,j}\gamma_{j,i}$ of the curves form a planar embedding of the complete graph $K_5$  with vertices $\{x_1,\ldots,x_5\}$, therefore, by the strong Hanani-Tutte theorem, there are two independent edges that cross an odd number of times.
    As each edge is the concatenation of two curves, $\gamma_{i,j}$ and $\gamma_{j,i}$, there are also two curves, without loss of generality, $\gamma_{1,2}\subset F_1$ and $\gamma_{3,4}\subset F_3$, that cross an odd number of times.
    From here the proof is the same as before.
    As $F_{1}\setminus F_{3}$ is connected because of the non-piercing property, we can extend $\gamma_{1,2}$ to a Jordan-curve $\bar\gamma_{1,2}\subset F_{1}$ that still crosses $\gamma_{3,4}\subset F_{3}$ an odd number of times.
    Similarly, we can extend $\gamma_{3,4}$ to a Jordan-curve $\bar\gamma_{3,4}\subset F_{3}$ that still crosses $\bar\gamma_{1,2}$ an odd number of times.
    But then these two closed curves would violate the Jordan curve theorem.
\end{proof}

Now we only need to show that a $(p,q)$-property holds for some $p\ge q\ge 5$ in every family $\FF$ of non-piercing regions with bounded independence number $\nu(\FF)$.
We first make some definitions.

For a family $\FF$ and collection of elements $P$, define the dual intersection hypergraph $\HH(\FF, P)$ such that the vertices correspond to members of $\FF$, while hyperedges correspond to elements of $P$ such that for every $p \in P$ the vertex set $H_p=\{F\in \FF: p \in F\}$ forms a hyperedge.
The Delaunay graph $\mathcal{D(\FF)}$ is the subgraph of $\HH(\FF, P)$ that contains only the hyperedges with exactly two vertices, that is, a pair of vertices corresponding to the sets $F,G \in \FF$ are connected by an edge if there is an element $p \in F\cap G$ which is not contained in any other member of $\FF$.
Raman and Ray proved (in a much more general form) that if $\FF$ is a family of non-piercing regions, then $\mathcal{D(\FF)}$ is planar.

\begin{cor}[of Raman and Ray \cite{RamanRay}]\label{cor:RR}
    If $\FF$ is a family of non-piercing regions, then the Delaunay graph $\mathcal{D}(\FF)$ is planar, therefore, it can have at most $3 |\FF|$ edges.
\end{cor}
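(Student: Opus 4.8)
The plan is to obtain the statement as an immediate consequence of the cited theorem of Raman and Ray together with the standard edge bound for planar graphs. The first step is simply to invoke \cite{RamanRay}: for a family $\FF$ of non-piercing regions in general position (which, following the paper's global convention, we assume throughout), the Delaunay graph $\mathcal{D}(\FF)$ is planar. This is the substantive input; Raman and Ray prove a considerably more general statement, and here we only quote the planar special case that we need.

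The second step is bookkeeping about the combinatorial structure of $\mathcal{D}(\FF)$. By definition its vertex set is $\FF$, two vertices $F\neq G$ are joined by at most one edge (namely when $F\cap G$ contains a point lying in no other member of $\FF$), and there are no loops. Hence $\mathcal{D}(\FF)$ is a simple graph on $n:=|\FF|$ vertices. I would restrict attention to finite $\FF$, which is all that is needed for the applications in this paper; for infinite families the conclusion should be read as a bound on every finite subgraph.

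The third step is Euler's formula. For a simple planar graph on $n\ge 3$ vertices with $e$ edges and $f$ faces one has $n-e+f=2$; since every face is bounded by at least three edges and every edge lies on the boundary of at most two faces, $3f\le 2e$, whence $e\le 3n-6\le 3n$. For $n\le 2$ the graph has at most one edge, which is again at most $3n$. In all cases $e\le 3|\FF|$, as claimed.

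I do not expect any genuine obstacle: all the difficulty is hidden inside the Raman--Ray theorem, and the only point that requires care is ensuring that the general position hypothesis (boundaries of any two regions meeting in finitely many points) is in force, since that is the setting in which \cite{RamanRay} is proved — this is precisely why the paper adopts that convention globally, and one could alternatively appeal to the recent \cite{RS25} to dispense with it.
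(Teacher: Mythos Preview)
Your proposal is correct and matches the paper's approach exactly: the paper states this as an immediate corollary of \cite{RamanRay} without further proof, the only content being that planarity of $\mathcal{D}(\FF)$ comes from Raman--Ray and the edge bound $3|\FF|$ follows from Euler's formula for simple planar graphs. There is nothing to add.
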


Now we are ready to state the last lemma needed to complete the proof.

\begin{lemma}\label{lem:22pq}
    If $\FF$ is a family of non-piercing regions with the $(\nu+1,2)$-property, then $\FF$ also has the $(p,q)$-property for any $p>3e(\nu+1) \nu q+1$ for any $q\ge 2$, where $e=2.71...$ is Euler's number.
\end{lemma}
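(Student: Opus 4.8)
The plan is to prove the statement in contrapositive form: fix any subfamily $\GG\subseteq\FF$ with $|\GG|=p$ that witnesses a failure of the $(p,q)$-property, i.e.\ no $q$ members of $\GG$ have a common point, equivalently every point of the plane lies in at most $q-1$ members of $\GG$; we show $p\le 3e(\nu+1)\nu q+1$. We may assume $q\ge 3$, since for $q=2$ such a $\GG$ would be pairwise disjoint, forcing $p=\nu(\GG)\le\nu$, for which there is nothing to prove.

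The first step is a purely combinatorial lower bound on the number $I$ of intersecting (unordered) pairs inside $\GG$. Since $\FF$, hence $\GG$, has the $(\nu+1,2)$-property, every $(\nu+1)$-subset of $\GG$ contains at least one intersecting pair. Double counting the incidences between intersecting pairs and $(\nu+1)$-subsets of $\GG$ — each intersecting pair is contained in exactly $\binom{p-2}{\nu-1}$ such subsets — gives $I\cdot\binom{p-2}{\nu-1}\ge\binom{p}{\nu+1}$, that is, $I\ge\frac{p(p-1)}{\nu(\nu+1)}$.

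The second step is an upper bound on $I$ obtained from a random sub-family together with the planarity of the Delaunay graph (Corollary \ref{cor:RR}). For each intersecting pair $\{F,G\}$ fix a witness point $x_{F,G}\in F\cap G$; by the depth assumption, $x_{F,G}$ lies in at least two and at most $q-1$ members of $\GG$. Form a random sub-family $\GG'$ by keeping each member of $\GG$ independently with probability $r=1/q$. If $F,G\in\GG'$ while none of the at most $q-3$ other members of $\GG$ containing $x_{F,G}$ is kept, then $x_{F,G}$ is contained in exactly $F$ and $G$ among $\GG'$, so $\{F,G\}$ is an edge of $\mathcal{D}(\GG')$; this event has probability at least $r^2(1-r)^{q-3}$. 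Summing over intersecting pairs, $\mathbb{E}\,|E(\mathcal{D}(\GG'))|\ge I\,r^2(1-r)^{q-3}$. On the other hand $\GG'\subseteq\FF$ is non-piercing, so $\mathcal{D}(\GG')$ is planar and $|E(\mathcal{D}(\GG'))|\le 3|\GG'|$; taking expectations, $\mathbb{E}\,|E(\mathcal{D}(\GG'))|\le 3rp$. Comparing the two and cancelling one factor of $r$ yields $I\le 3p/\bigl(r(1-r)^{q-3}\bigr)$, and since $r(1-r)^{q-3}=\tfrac1q(1-\tfrac1q)^{q-3}\ge\tfrac1q(1-\tfrac1q)^{q-1}\ge\tfrac1{eq}$, we get $I\le 3epq$.

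Combining the two bounds gives $\frac{p(p-1)}{\nu(\nu+1)}\le 3epq$, hence $p-1\le 3e\nu(\nu+1)q$, i.e.\ $p\le 3e(\nu+1)\nu q+1$. Equivalently, once $p>3e(\nu+1)\nu q+1$ no $p$-subfamily can fail the $(p,q)$-property, and the property then also holds for all larger $p$. I expect the main obstacle to be the middle step: recognizing that random sampling against the planarity of the Delaunay graph is the right device to convert ``bounded independence number plus bounded depth'' into ``few intersecting pairs,'' and in particular correctly establishing the edge-probability estimate — the fact that retaining exactly the two members of a pair among all regions covering a fixed common point forces a Delaunay edge in $\GG'$. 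The two elementary estimates (the double counting for the lower bound on $I$, and the choice of $r$) are routine.
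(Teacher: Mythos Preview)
Your proof is correct and follows essentially the same approach as the paper's: the Clarkson--Shor random sampling with retention probability $1/q$, the double-counting lower bound $I\ge p(p-1)/(\nu(\nu+1))$ on intersecting pairs, and the comparison of the expected Delaunay edge count against $3\,\mathbb{E}|\GG'|$ via Corollary~\ref{cor:RR}. The only cosmetic differences are that you phrase the argument as a contrapositive and treat the case $q=2$ separately, whereas the paper runs a single contradiction argument for all $q\ge 2$.
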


\begin{proof}
    The proof uses the so-called Clarkson-Shor method \cite{CS}.
    Fix some $q\ge 2$ and $p>3e(\nu+1)\nu q+1$, and suppose for a contradiction that there exists a subfamily $\GG \subset \FF $ of size $p$ without a common intersection. 
   Delete each set from $\GG$ with probability $1-\frac{1}{q}$ to obtain the subsubfamily $\HH$. The average number of remaining sets is $\frac{p}{q}$, so in expectation the Delaunay graph $\mathcal{D(\HH)}$ has this many vertices.
   The probability that two different intersecting sets of $\GG$ span an edge of $\mathcal{D(\HH)}$ is at least $\frac{1}{q^2} \left( 1 - \frac{1}{q} \right)^{q-3} > \frac{1}{e} \frac{1}{q^2}$, as any point in the intersection of two sets is contained in at most $q-3$ other sets; if these are all deleted, then we get a Delaunay edge.  As from any $\nu+1$ sets in $\GG$, there are two that intersect, the number of intersecting set pairs in $\GG$ is at least $\frac{\binom{p}{\nu+1}}{\binom{p-2}{\nu-1}} = \frac{p(p-1)}{(\nu+1)\nu}$.
   Consequently, the expected number of edges is at least $\frac{1}{e} \frac{p(p-1)}{(\nu+1)\nu q^2}$.
   As $\GG$ and all its subsystems are non-piercing, by Corollary \ref{cor:RR} the expected number of edges after deletion can be at most three times the expected number of vertices, that is, $\frac{1}{e} \frac{p(p-1)}{(\nu+1)\nu q^2} \leq 3\frac{p}{q}$, which contradicts $p>3e(\nu+1)\nu q +1$.
\end{proof}

 Lemmas \ref{lem:VCdim4} and \ref{lem:22pq} imply that $\FF$ satisfies the assumptions of Theorem \ref{pqthm} with $q=5$ and some large enough $p$, which implies that $\FF$ can be stabbed with a constant number of points.
 This finishes the proof of Theorem \ref{thm:pierce}. \qed

\section{Proof of Theorem \ref{thm:faces}}\label{sec:faces}

In this section, we present the exact statement and the proof of Theorem \ref{thm:faces}.

First, we introduce the definitions and notation following Axenovich, Kie{\ss}le and Sagdeev \cite{AKS}.
A plane graph is a graph that is embedded in the plane without crossing edges.
A 2-connected plane graph $G$ is \emph{$C_{< \ell}$-free} if it contains no cycle of length smaller than $\ell$.
$G$ is a  \emph{maximal $C_{< \ell}$-free plane graph} if adding any new edge would either create a crossing or a cycle of length less than $\ell$.
Define $f_{max}(\ell)$ as the largest possible face length of a 2-connected maximal $C_{< \ell}$-free plane graph.

Axenovich, Kie{\ss}le and Sagdeev \cite{AKS} showed that $f_{max}(\ell)=2\ell-3$ for $3 \leq \ell \leq 6$ using a former result of Axenovich, Ueckerdt, and Weiner \cite{AUW}.
For larger values of $\ell$, they showed a lower bound of $3\ell-9$ for $7\leq \ell \leq 9$, and $3\ell-12$ for $\ell \geq 10$.

They also showed an upper bound of $2(\ell-2)^2+1$ for any $\ell\geq 7$, and asked whether it could be improved to a linear upper bound. We give an affirmative answer to this question.

\begin{Th}[Theorem \ref{thm:faces}, restated]\label{thm:faces2}
    $f_{max}(\ell) \leq K \ell$ for some absolute constant $K$.
\end{Th}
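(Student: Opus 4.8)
The goal is to show that in a maximal $C_{<g}$-free 2-connected plane graph $G$, every face has length $O(g)$. The plan is to take a face $R$ with boundary cycle $C$ of length $\ell$, and associate to it a family of regions to which Theorem \ref{thm:pierce} applies. The natural candidates are the ``potential chords'' of $C$: for each pair of vertices $u,v$ on $C$ that are \emph{not} adjacent in $G$ and at cyclic distance $\ge 2$ along $C$, maximality forbids adding the edge $uv$, so there must already be a short $u$--$v$ path in $G$ of length $\le g-2$ (otherwise adding $uv$ would create no cycle of length $<g$). For each such non-edge, fix such a short path $P_{uv}$, and let $F_{uv}$ be a thin region consisting of (a neighborhood of) the chord of $R$ connecting $u$ and $v$ through the interior of $R$, together with (a thin neighborhood of) the path $P_{uv}$ lying in the exterior. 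This $F_{uv}$ is a connected planar set, and it ``blocks'' the chord $uv$ inside $R$.

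**Why non-piercing and bounded independence number should hold.**

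Two such regions $F_{uv}$ and $F_{u'v'}$ interact only where their exterior paths $P_{uv}, P_{u'v'}$ overlap and where their interior chords cross. The key point is to choose the exterior routes and the interior chords canonically — e.g., route all interior chords as straight segments in a convex drawing of $R$, and among exterior paths build a single tree-like or laminar structure — so that the family is non-piercing; the interior chords of a convex polygon already form a pseudo-disk-like (hence non-piercing) system when thickened, and the main technical work is to show the exterior parts can be added without destroying this. For the independence number: if we had many pairwise disjoint $F_{uv}$'s, their interior chords would be pairwise non-crossing chords of $R$, hence would split $R$ into many parts, \emph{and} their exterior paths would be pairwise vertex-disjoint short paths — one then argues that too many such disjoint short ``bridges'' force a short cycle in $G$, contradicting $C_{<g}$-freeness. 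So $\nu(\FF) \le c(g)$ for some function of $g$; in fact one wants $\nu(\FF) = O(1)$ or at worst $O(g)$, carefully tracked.

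**Finishing the argument.**

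Granted the family $\FF = \{F_{uv}\}$ is non-piercing with $\nu(\FF) \le \nu_0$, Theorem \ref{thm:pierce} gives a set of $\le f(\nu_0)$ points stabbing every $F_{uv}$. Now I claim $\ell = O(f(\nu_0))$: split $C$ into arcs delimited by (projections to $C$ of) the stabbing points, or more directly, observe that a single stabbing point $p$ lies in $F_{uv}$ only for pairs $u,v$ whose chord/path passes near $p$; using planarity and the structure of the chords one shows each point can be ``responsible'' for only a bounded chunk of $C$, so boundedly many points force $\ell$ bounded. The cleanest incarnation: if $\ell$ is large, pick $\ell/2$ non-edges $\{u_i, v_i\}$ whose interior chords form a ``fan'' or a long pairwise-crossing/laminar pattern that cannot all be stabbed by $f(\nu_0)$ points unless $\ell \le K g$ — here the $g$ enters because consecutive vertices on $C$ \emph{are} edges of $G$, so the only obstructions to having many independent $F_{uv}$ come from the short exterior paths, whose length is $\le g-2$.

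**Main obstacle.**

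The delicate step is the construction of $\FF$: ensuring simultaneously that (i) each $F_{uv}$ is connected and genuinely obstructs its chord, (ii) the family is non-piercing — which constrains how exterior paths may be routed and requires a consistent tie-breaking rule, perhaps drawing $G$'s exterior in a fixed way and taking thin neighborhoods of shortest paths in a BFS tree — and (iii) the quantitative link between $\nu(\FF)$, the constant from Theorem \ref{thm:pierce}, and $\ell$ comes out \emph{linear} in $g$ rather than, say, quadratic. I expect (ii) to be the real content: non-piercing is a global condition, and paths in a graph can wind around each other, so one likely needs to replace ``shortest path'' by a carefully chosen canonical path (e.g. leftmost/rightmost in a planar BFS layering) and argue that thin neighborhoods of these canonical paths, unioned with straight interior chords, form a non-piercing family. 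The independence-number bound (iii) should then follow from a Menger/planar-separator style counting argument against the girth condition.
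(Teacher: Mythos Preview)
Your approach is genuinely different from the paper's, and as written it has real gaps at exactly the points you yourself flag as delicate.

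\textbf{The paper's construction.} The paper does \emph{not} build regions from non-edge pairs. Instead it assigns one region $N_i$ to each vertex $v_i$ of the facial cycle $C$: after reducing to even $g$, the region $N_i$ is a carefully thickened copy of the BFS tree of radius $g/2-1$ rooted at $v_i$. Because the girth is at least $g$, this neighbourhood is a tree, so each $N_i$ is simply connected. Because any two vertices $v_i,v_j$ on $C$ satisfy $d(v_i,v_j)\le g-2$ (Observation~\ref{obs:distance}), the two trees share a vertex, hence $N_i\cap N_j\ne\emptyset$: the family $\NN$ is \emph{pairwise intersecting}, so $\nu(\NN)=1$. Non-piercing is then proved by an explicit (and somewhat technical) argument exploiting that the thickening radii decrease with distance from the root. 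Theorem~\ref{thm:pierce} now yields a constant-size stabbing set $Z$; partitioning $C$ by nearest point of $Z$ and applying a Davenport--Schinzel bound (no $abab$ pattern between colour classes, and each monochromatic run has length $\le g-1$) gives $|C|\le (2T-1)(g-1)$.

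\textbf{Where your proposal breaks.} Your $F_{uv}$ is a thickened closed curve (interior chord plus exterior short path), hence typically a topological annulus, and such annuli \emph{can} pierce one another. Concretely: if the chords $uv$ and $u'v'$ cross inside $R$, and the exterior paths $P_{uv},P_{u'v'}$ also cross at some vertex $w$ (i.e.\ the four incident path-edges alternate in the cyclic order around $w$), then $F_{u'v'}$ cuts $F_{uv}$ at two separated places and $F_{uv}\setminus F_{u'v'}$ has two components (the $u$-side and the $v$-side). You note that this is the hard step but do not supply a routing that avoids it; a single BFS tree does not help, since shortest paths between different pairs can meet at a vertex and then diverge in alternating directions. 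Separately, your independence-number bound is not established: nested non-crossing interior chords $u_1v_1\supset u_2v_2\supset\cdots$ with pairwise vertex-disjoint exterior paths create no short cycle, so nothing you wrote prevents $\nu(\FF)$ from being as large as $\Theta(|C|)$. Finally, even granting a stabbing set of size $T$, the passage to $|C|=O(Tg)$ is only gestured at; in the paper this step is a genuine argument (nearest-centre colouring plus Davenport--Schinzel), not a triviality.
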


We start with a simple observation made by Axenovich, Kie{\ss}le and Sagdeev \cite{AKS}.
 For two vertices $u,v$, let $d(u,v)$ denote their distance in $G$, and define the distance of a vertex $u$ and an edge $e=vw$ as $d(u,e)= \max\{d(u,v),d(u,w)\}$.
\begin{obs}[\cite{AKS}]\label{obs:distance}
    For any two vertices $u,v$ of a facial cycle of a maximal $C_{< \ell}$-free graph, their distance $d(u,v) \leq \ell-2$.
\end{obs}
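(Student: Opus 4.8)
The plan is to exploit edge-maximality directly: if $u$ and $v$ were not joined by a short path, we could legally insert the edge $uv$ through their common face, contradicting maximality, and the quantitative form of this contradiction is exactly the claimed bound.

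First I would dispose of the trivial case: if $u$ and $v$ are adjacent then $d(u,v)=1\le g-2$, so assume they are non-adjacent. Since $G$ is $2$-connected, every face of $G$ is bounded by a cycle, so the face $\Phi$ on whose boundary cycle both $u$ and $v$ lie is (homeomorphic to) an open disk with $u,v$ on its boundary; hence I can draw a simple arc from $u$ to $v$ whose interior lies inside $\Phi$ and which meets the rest of $G$ only in $u$ and $v$. Adding this arc as a new edge $e$ yields a plane graph $G+e$ with no crossing, so by the definition of a maximal $C_{<g}$-free plane graph, $G+e$ must contain a cycle of length at most $g-1$. That cycle cannot lie entirely in $G$, since every cycle of $G$ has length at least $g$; hence it uses $e$, and deleting $e$ from it leaves a $u$--$v$ path $P$ in $G$ with $1+|P|\le g-1$. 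Therefore $d(u,v)\le|P|\le g-2$, as desired. The only step that requires any care is the topological claim that $e$ can be inserted inside $\Phi$ without a crossing, which is where $2$-connectedness enters (through the fact that faces are disks bounded by cycles); that is about as close to an obstacle as this argument gets, the rest being immediate.
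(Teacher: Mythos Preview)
Your proof is correct and follows essentially the same argument as the paper's: both use that a non-adjacent pair $u,v$ on a common facial cycle can be joined by an edge inside that face, and maximality then forces a short $u$--$v$ path in $G$. The paper phrases it as a contradiction (if $d(u,v)\ge g-1$ the new edge would keep girth $\ge g$), while you phrase it directly (the new edge must create a short cycle, hence a short path), but these are the same idea.
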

\begin{proof}
    If $d(u,v) \geq\ell-1$, then $u$ and $v$ are non-adjacent, and the edge $uv$ could be added inside the cycle, preserving planarity, contradicting the maximality of our graph. 
\end{proof}

We first show that it is sufficient to prove Theorem \ref{thm:faces2} for the case when $\ell$ is even.
Suppose that $\ell$ is odd, and consider a plane graph $G$ with girth at least $\ell$ which is maximal with these constraints and has a face with boundary $C$ of length $m$. Subdivide each edge $e=uv$ with a middle vertex $e'$, thus creating a graph $G'$. It is easy to see that $G'$ is planar, and has girth at least $2\ell$. Additionally, it is maximal to these parameters:  By Observation \ref{obs:distance}, any two vertices on a face of $G$ are connected by a path of length at most $\ell-2$ 
(not necessarily along the face),
and every vertex of $G'$ either corresponds to a vertex of $G$ or is adjacent to one, therefore, any two vertices on a face of $G'$ are connected by a path of length at most $1+2(\ell-2)+1$. This implies that adding an edge between two vertices of $G'$ would either violate planarity or create a cycle of length at most $2\ell-1$.
Since $G'$ has a face of size $2m$, this implies that $2m \leq K \cdot  2\ell$, which implies our bound for $G$ as well.

Fix a maximal $\mathcal{C}_\ell$-free plane graph $G$ for some even $\ell$ and a facial cycle $C$ of $G$, we will bound the length $m$ of $C$ in terms of $\ell$. 
By Fáry's theorem, we can assume that $G$ is geometric, that is, each of its edges is a segment, without changing the topology of the embedding.

Let $\bar{B}(x,r)$ denote the closed (euclidean) disk of radius $r$ around a point $x$ in the plane, and for a simple curve $\gamma$, let $\bar{B}(\gamma,r)=\{x\in\mathbb R^2:\exists p\in \gamma\,\, |x-p|\leq r\}$, the set of points at euclidean distance at most $r$ from (a point of) $\gamma$. 

For each $v_i \in C$, we define a blow-up of its $(\ell /2-1)$-neighborhood in $G$.

Let $\rho$ be a small positive number that is less than half of the minimum euclidean distance of a vertex and a non-incident edge, and is also less than half of the minimum euclidean distance of two vertices.
Additionally, let $\delta <\rho$ be a small enough positive number such that for any two edges $e,f$ incident to a common vertex $u$, the intersection of $\bar{B}(e,\delta)$ and $\bar{B}(f,\delta)$ is contained  in $\bar{B}(u,\rho)$.
We also define $0<\varepsilon_1 < \varepsilon_2 < \dots < \varepsilon_m < \frac{\delta}{\ell^2}$ to be arbitrary positive numbers.

Finally, for $u \in V(G)$ and $i \in [m]$, let $\bar{B}^i (u)= \bar{B}\left(u,\frac{\rho}{d(v_i,u)+1} + \varepsilon_i \right) $, and for $e \in E(G)$ and $i \in [m]$, let $\bar{B}^i (e) = \bar{B}\left(e, \frac{\delta}{d(v_i,e) +1} + \varepsilon_i \right)$.

Now we can define the neighborhood regions $N_i$ for each vertex $v_i$ (see Figure \ref{fig:Ni}).

\[
    N_i= \bigcup\limits_{\substack{u \in V(G) \\ d(v_i,u)\leq \ell /2-1}}
    \bar{B}^i (u)
    ~~~\cup
    \bigcup\limits_{\substack{e \in E(G) \\d(v_i,e)\leq \ell /2-1}} 
    \bar{B}^i (e).
\]

\begin{figure}
\begin{center}
\begin{tikzpicture}[scale=0.8]
    \foreach \i in {0,1,2,...,15} {
        \node[inner sep=0pt] (v\i) at (22.5*\i:3cm) {};
    }
    
    \node[inner sep=0pt] (u1) at (-0.8,0.2) {};
    \node[inner sep=0pt] (u2) at (1.2,1) {};
    \node[inner sep=0pt] (u3) at (-1.3,-1.2) {};
    \node[inner sep=0pt] (u4) at (1.5,-1.2) {};
    \node[inner sep=0pt] (u5) at (0.2,-1) {};
    \node[inner sep=0pt](u6) at (1,-0.2) {};
    \node[inner sep=0pt] (u8) at (0.2,1.8) {};
    \node[inner sep=0pt] (u10) at (-2,0.5) {};

    \draw[line width=4 mm, color=red!30, opacity=0.9]  (v4) -- (v5);
    \draw[line width=4 mm, color=red!30, opacity=0.9]  (v4) -- (v3);
    \draw[line width=3 mm, color=red!30, opacity=0.9]  (v5) -- (v6);
    \draw[line width=3 mm, color=red!30, opacity=0.9]  (v3) -- (v2);
    \draw[line width=3 mm, color=red!30, opacity=0.9]  (v3) -- (u8);
    \draw[line width=2 mm, color=red!30, opacity=0.9]  (v6) -- (v7);
    \draw[line width=2 mm, color=red!30, opacity=0.9]  (u8) -- (u1);
    \draw[line width=2 mm, color=red!30, opacity=0.9]  (v2) -- (u2);
    \draw[line width=2 mm, color=red!30, opacity=0.9]  (v2) -- (v1);

    \filldraw[white] (v4) circle (5 mm);
    \filldraw[white] (v5) circle (4 mm);
    \filldraw[white] (v3) circle (4 mm);
    \filldraw[white] (v6) circle (3 mm);
    \filldraw[white] (v2) circle (3 mm);
    \filldraw[white] (u8) circle (3 mm);
    \filldraw[white] (v7) circle (2 mm);
    \filldraw[white] (v1) circle (2 mm);
    \filldraw[white] (u10) circle (2 mm);
    \filldraw[white] (u1) circle (2 mm);
    \filldraw[white] (u2) circle (2 mm);
    
    \filldraw[red!30, opacity=0.9] (v4) circle (5 mm);
    \filldraw[red!30, opacity=0.9] (v5) circle (4 mm);
    \filldraw[red!30, opacity=0.9] (v3) circle (4 mm);
    \filldraw[red!30, opacity=0.9] (v6) circle (3 mm);
    \filldraw[red!30, opacity=0.9] (v2) circle (3 mm);
    \filldraw[red!30, opacity=0.9] (u8) circle (3 mm);
    \filldraw[red!30, opacity=0.9] (v7) circle (2 mm);
    \filldraw[red!30, opacity=0.9] (v1) circle (2 mm);
    \filldraw[red!30, opacity=0.9] (u1) circle (2 mm);
    \filldraw[red!30, opacity=0.9] (u2) circle (2 mm);
    
    \draw (u4) -- (u5) -- (u3) -- (u1);
    \draw (u2) -- (u6) -- (u5);
    \draw (u8) -- (u1);
    \draw (u10) -- (u1);

    \draw (u2) -- (v2);
    \draw (u3) -- (v11);
    \draw (u8) -- (v3);
    \draw (u4) -- (v14);
    \draw (u10) -- (v7);

    \foreach \i in {0,1,2,...,15} {
        \node[circle,draw,scale=0.5,fill=black] at (v\i) {};
    }

    \node[circle,draw,fill=black,scale=0.5] at (u1)  {};
    \node[circle,draw,fill=black,scale=0.5] at (u2)  {};
    \node[circle,draw,fill=black,scale=0.5] at (u3){};
    \node[circle,draw,fill=black,scale=0.5] at (u4)  {};
    \node[circle,draw,fill=black,scale=0.5] at (u5) {};
    \node[circle,draw,fill=black,scale=0.5] at (u6)  {};
    \node[circle,draw,fill=black,scale=0.5] at (u8) {};
    \node[circle,draw,fill=black,scale=0.5] at (u10) {};

    \foreach \i in {0,1,2,...,15} {
        \pgfmathtruncatemacro{\j}{mod(\i+1,16)} 
        \draw (v\i) -- (v\j);
    }

    \node[above] at (v4) {\( v_i \)};
\end{tikzpicture}

\caption{ Illustration for a neighborhood $N_i$. 
}\label{fig:Ni}
\end{center}
\end{figure}
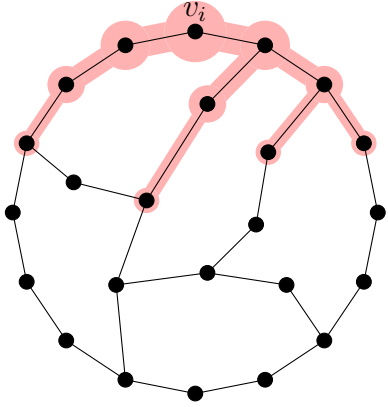

Note that the boundary of any two members of $\NN = \{N_1, N_2, \dots, N_m\}$ intersect in finitely many points as the radii of their blow-ups are perturbed with small amounts, so $\NN$ is a family of regions in general position.
In order to apply Theorem \ref{thm:pierce} to $\mathcal{N}$, we need to verify that it satisfies the properties required by the theorem.

\begin{prop}
    $N_i$ is simply connected for each $i$.
\end{prop}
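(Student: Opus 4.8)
The plan is to identify the homotopy type of $N_i$ with that of a subgraph of $G$ which the girth hypothesis forces to be a tree. Write $H_i$ for the subgraph of $G$ induced on the ball $B_G(v_i,g/2-1)=\{u\in V(G):d(v_i,u)\le g/2-1\}$. I will show that $N_i$ is a thin neighbourhood of $H_i$ which deformation retracts onto it, and that $H_i$ is a tree; since trees are contractible, this gives that $N_i$ is contractible, in particular connected and simply connected.

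\emph{Step 1: $H_i$ is a tree.} It is connected, since a shortest $v_i$--$u$ path with $d(v_i,u)\le g/2-1$ stays inside $B_G(v_i,g/2-1)$ and hence lies in $H_i$. For acyclicity, fix a shortest‑path tree $R$ of $G$ rooted at $v_i$. If $H_i$ contained a cycle $Z$, then, $R$ being acyclic, $Z$ would use some edge $uw\notin R$; letting $x$ be the nearest common ancestor of $u$ and $w$ in $R$, the tree‑path from $x$ to $u$, followed by the edge $uw$, followed by the tree‑path from $w$ to $x$, is a cycle of $G$ of length $(d(v_i,u)-d(v_i,x))+1+(d(v_i,w)-d(v_i,x))\le 2(g/2-1)+1=g-1<g$, contradicting the girth lower bound. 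Hence $H_i$ is a connected acyclic graph, i.e.\ a tree. This is exactly the point at which the reduction to even $g$ and the choice of neighbourhood radius $g/2-1$ are used: $2(g/2-1)+1$ is strictly less than $g$.

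\emph{Step 2: $N_i$ retracts onto $H_i$.} First, $H_i\subseteq N_i$, since $\bar B^i(u)\ni u$ for every vertex $u$ of $H_i$ and $\bar B^i(e)\supseteq e$ for every edge $e$ of $H_i$. Next, from the defining properties of $\rho$ and $\delta$, and taking the $\varepsilon_i$ sufficiently small as permitted, the building blocks of $N_i$ interact only along the incidences of $H_i$: distinct vertex‑disks $\bar B^i(u),\bar B^i(w)$ are disjoint; $\bar B^i(u)$ meets a stadium $\bar B^i(e)$ only when $u$ is an endpoint of $e$; and for edges $e,f$ of $H_i$ the stadiums $\bar B^i(e),\bar B^i(f)$ are disjoint unless $e$ and $f$ share a vertex $u$, in which case $\bar B^i(e)\cap\bar B^i(f)\subseteq\bar B^i(u)$. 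These are routine triangle‑inequality verifications which I will not spell out. Since the disks $\bar B^i(u)$ and the stadiums $\bar B^i(e)$ are convex and, by the above, all their nonempty intersections are convex hence contractible, $N_i$ is a regular neighbourhood of the embedded tree $H_i$ and deformation retracts onto it; equivalently, one applies the Nerve Lemma to this good convex cover, whose nerve (the subdivision of $H_i$ with the disk‑vertex and the incident stadium‑vertices coned off at each vertex of $H_i$) visibly collapses onto a subdivision of $H_i$. Either way $N_i\simeq H_i$, which is contractible, so $N_i$ is simply connected.

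I expect the only genuinely pointed step to be Step 1 — recognizing that the girth assumption forces $H_i$ to be a tree via the bound $g-1<g$ on the length of a fundamental cycle. Everything else is standard neighbourhood‑retraction bookkeeping, together with the elementary estimates guaranteeing the clean intersection pattern of the disks and stadiums.
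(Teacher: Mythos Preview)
Your approach is the same as the paper's, whose one-sentence proof simply observes that the girth bound forces the vertices at distance at most $g/2-1$ from $v_i$ to induce a tree and leaves the deformation retraction of $N_i$ onto that tree implicit; you spell out both parts and correctly identify Step~1 as the only substantive point. One minor slip: the containment $\bar B^i(e)\cap\bar B^i(f)\subseteq\bar B^i(u)$ you assert need not hold as stated, since for $u\ne v_i$ the disk $\bar B^i(u)$ has radius at most $\rho/2+\varepsilon_i<\rho$ while the choice of $\delta$ only guarantees containment in $\bar B(u,\rho)$ --- but this is harmless for your nerve-lemma argument, as all intersections remain convex and the nerve still collapses to $H_i$.
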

\begin{proof}
    Since the girth of $G$ is at least $\ell$, the vertices whose distance from $v_i$ is at most $\ell /2-1$ form a tree.
\end{proof}

\begin{lemma}\label{lem:nonpiercing}
    The set-system $\mathcal{N}$ is non-piercing. 
\end{lemma}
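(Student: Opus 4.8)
The plan is to reduce the statement to the tree structure of the neighbourhoods together with a monotonicity property of a distance function, and then to establish connectedness by a local analysis that crucially uses the large girth.

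First I would dispose of the trivial cases: if $v_i=v_j$ then $N_i=N_j$; and if $d(v_i,v_j)\ge g-1$ then no vertex or edge lying within $g/2-1$ of $v_i$ can lie within $g/2-1$ of $v_j$, and from the smallness of $\rho,\delta$ (blow-ups of non-incident features stay far apart, and two incident features overlap only inside $\bar B(\cdot,\rho)$ of their common vertex) one gets $N_i\cap N_j=\emptyset$, so $N_i\setminus N_j=N_i$ is connected. So assume $v_i\ne v_j$ and $d(v_i,v_j)\le g-2$. Now introduce $\phi(\sigma):=d(v_i,\sigma)-d(v_j,\sigma)$ for features $\sigma$ (vertices or edges) of $G$. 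Comparing the defining radii, for every feature $\sigma$ of $T_i:=\{\sigma:d(v_i,\sigma)\le g/2-1\}$ exactly one of $\bar B^i(\sigma)\subseteq\bar B^j(\sigma)$ or $\bar B^j(\sigma)\subsetneq\bar B^i(\sigma)$ holds, and which one is governed by the sign of $\phi(\sigma)$, with ties broken by whether $i<j$ (the $\varepsilon$-perturbations are smaller than the gaps $\tfrac\rho k-\tfrac\rho{k+1}$ and $\tfrac\delta k-\tfrac\delta{k+1}$ for $k\le g$). Call $\sigma$ \emph{$j$-dominant} in the first case, \emph{$i$-dominant} in the second; note $j$-dominance forces $d(v_j,\sigma)\le d(v_i,\sigma)\le g/2-1$, hence $\sigma\in T_j$ and $\bar B^i(\sigma)\subseteq N_j$. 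Therefore $N_i\setminus N_j$ meets no blow-up of a $j$-dominant feature, so $N_i\setminus N_j=M\setminus N_j$, where $M:=\bigcup\{\bar B^i(\sigma):\sigma\text{ an $i$-dominant feature of }T_i\}\subseteq N_i$.

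The structural heart is that $M$ is a thickened tree. Since $G$ has girth at least $g=2(g/2-1)+2$, every geodesic starting at $v_i$ of length at most $g/2-1$ stays inside $T_i$, which is itself a tree; along such a geodesic $d(v_i,\cdot)$ increases by exactly $1$ at each step while $d(v_j,\cdot)$ changes by at most $1$, so $\phi$ is non-decreasing along it (over the interleaved sequence of vertices and edges). As $\phi(v_i)=-d(v_i,v_j)<0$, the $i$-dominant features of $T_i$ form a downward-closed set along every root-path, i.e.\ a connected subtree $S$ of $T_i$ through $v_i$; and $M$ is the blow-up of $S$, which deformation retracts onto $S$, so $M$ is a topological disk.

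It remains to show $M\setminus N_j$ is connected. By locality of the blow-ups, near an $i$-dominant vertex $u$ the set $N_i\setminus N_j$ equals all of $\bar B^i(u)$ when $u\notin T_j$, and when $u\in T_j$ it equals the annulus $\bar B^i(u)\setminus\bar B^j(u)$ with one radial slit $\bar B^j(uw)\cap\bar B^i(u)$ removed for each edge $uw$ of $T_j$ at $u$; near an $i$-dominant edge $e$ it is the tube $\bar B^i(e)$ minus its caps, namely the whole tube if $e\notin T_j$ and the two side-strips $\bar B^i(e)\setminus\bar B^j(e)$ if $e\in T_j$ (here $i$-dominance of $e$ makes $\bar B^i(e)$ strictly fatter than $\bar B^j(e)$). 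The crucial point, again from the girth, is that at every $i$-dominant $u\in T_j$ \emph{at most one edge of $T_j$ at $u$ fails to lie in $S$}: the edge of $T_i$ from $u$ toward $v_i$ is $i$-dominant by monotonicity of $\phi$, hence in $S$; there is no edge of $G$ joining two vertices equidistant from $v_i$ (it would give a cycle of length $\le 2(g/2-1)+1<g$); and a second $T_j$-edge $uw$ outside $S$ must point away from $v_i$ with $\phi(uw)\ge 0$, which (since $\phi(u)\le 0$) forces $w$ onto a geodesic from $v_j$ to $u$ — two such $w$ would yield two distinct geodesics of length $\le g/2-1$, hence a cycle of length $<g$. (The boundary and tie sub-cases reduce to the same short-cycle bound.)

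Finally I would assemble these pieces. Over each $i$-dominant vertex the wedges (components of $\bar B^i(u)\setminus N_j$) are cut apart by the slits, but every slit except at most one corresponds to an edge $e$ of $S$, and the two side-strips running along $e$ reconnect the two wedges on the two sides of that slit to wedges over the other endpoint of $e$; full tubes along edges of $S$ not in $T_j$ do the same with a single strip. Propagating this along the connected tree $S$, one finds all of $M\setminus N_j$ in a single component. Making this last step precise — in effect, exhibiting a deformation retraction of $M\setminus N_j$ onto a connected graph, or running a direct path-chasing argument that no wedge is left stranded — is the main technical obstacle and the only place requiring genuine casework; everything before it is bookkeeping.
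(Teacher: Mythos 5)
Your setup is sound and, at its core, matches the paper's argument: the dominance function $\phi(\sigma)=d(v_i,\sigma)-d(v_j,\sigma)$, the reduction of $N_i\setminus N_j$ to $M\setminus N_j$ for a thickened subtree $M$ (every $j$-dominant feature has its $\bar B^i$-blow-up swallowed by $N_j$), and the girth-based claim that at each vertex $u$ at most one incident slit-creating edge escapes $S$, and that any such edge must point along the unique geodesic from $u$ toward $v_j$. This last observation is exactly the lemma at the heart of the paper's proof (its Proposition \ref{prop:path} isolates, for each vertex, at most one incident edge whose far endpoint is not strictly farther from the other center).

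The genuine gap is the step you defer: ``propagating this along the connected tree $S$'' is not bookkeeping, it is where the proof lives, and as sketched it does not go through. A healed slit at $u$ connects its two adjacent wedges only \emph{via} the wedges over the far endpoint $w$, so you must already know that the relevant wedges at $w$ lie in one component; recursing along $S$ rooted at $v_i$ is not well-founded, since the edge you must recurse through may be the parent edge of $u$ as easily as a child edge, and ``depth in $S$'' does not decrease. The correct organizing parameter is $d(\cdot,v_j)$: because every slit whose far endpoint is closer to $v_j$ lies on the unique $v_j$--$u$ geodesic, there is at most one such slit, so between any two points of the punctured annulus at $u$ one of the two boundary arcs is crossed only by healed slits leading to vertices strictly \emph{farther} from $v_j$; one then inducts on $-d(u,v_j)$ with base case $u\notin T_j$ (a full disk), and finally runs a second, outer induction along the shortest path from $v_i$ to walk every wedge back to $\bar B^i(v_i)$. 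You have established all the girth facts needed to run this double induction, but without organizing and executing it the connectedness of $M\setminus N_j$ — the actual assertion of the lemma — remains unproved.
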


While quite straight-forward, unfortunately our proof is quite tedious and technical.

\begin{proof}
Suppose for a contradiction that $N_i$ pierces $N_j$ for some $i\neq j$. Each point in $N_j\setminus N_i$ is contained in the neighborhood of a vertex or an edge which is closer to, or equally close to $v_j$ than to $v_i$.
Let $v_j'$ be an arbitrary point in $\bar{B}^j (v_j) \setminus N_i$.
We will show that any point $x \in N_j \setminus N_i$ is in the same connected component of $N_j \setminus N_i$ as $v_j'$, which contradicts our assumption.
Let $u_x$ be the vertex for which either 
$x \in \bar{B}^j(u_x)$,
or $x \in \bar{B}^j(e)$
for some edge $e$ incident to $u_x$ for which $d(v_j,e)= d(v_j, u_x) +1$. 
We will only discuss the case $\varepsilon_j < \varepsilon_i$, as the other case goes analogously.

Define $P_x=\{u_0=v_j,u_1, \dots, u_k=u_x\}$ to be the shortest path from $v_j$ to $u_x$
 in $G$. (This path is unique because $d(u_x,v_j)\le \ell /2-1$.)
 Note that for each vertex $w \in P_x$, $d(w,v_j) < d(w,v_i)$, where the inequality is strict because we assumed $\varepsilon_j <\varepsilon_i$. For an illustration, see Figure \ref{fig:connected}.

\begin{figure}
        \centering
        \begin{tikzpicture}
            \foreach \i in {0,1,2,...,3} {
                \node[inner sep=0pt] (v\i) at (1.5*\i,0) {};
            }
            \node[inner sep=0pt] (v4) at (6,0.5) {};

            \node[inner sep=0pt] (u0) at (4,1.5) {};
            \node[inner sep=0pt] (u1) at (3.5,-1.5) {};
            \node[inner sep=0pt] (u2) at (5,-1.5) {};
            \node[inner sep=0pt] (u3) at (1.5,1.8) {};

            \foreach \i in {0,1,2,...,3} {
            \pgfmathtruncatemacro{\nexti}{\i+1}
            \pgfmathsetmacro{\linewidth}{4 - \i*0.5}
            \draw[line width=\linewidth mm, color=red!30, opacity=0.9]  (v\i) -- (v\nexti);
            }
            \draw[line width=2.5 mm, color=red!30, opacity=0.9]  (u1) -- (v3);
            \draw[line width=2.5 mm, color=red!30, opacity=0.9]  (u2) -- (v3);
            \draw[line width=2.5 mm, color=red!30, opacity=0.9]  (u0) -- (v3);
            \draw[line width=2.5 mm, color=blue!30, opacity=0.9]  (u0) -- (u3);
            \draw[line width=2.8 mm, color=blue!30, opacity=0.9]  (u3) -- (0.8,2.8);
            \draw[line width=1.8 mm, color=red!30, opacity=0.9]  (u1) -- (2.8,-2);
            \draw[line width=1.8 mm, color=red!30, opacity=0.9]  (u2) -- (6,-2);
            \draw[line width=1.8 mm, color=red!30, opacity=0.9]  (v4) -- (7, 1);

            \filldraw[white, opacity=0.9] (v0) circle (6 mm);
            \filldraw[white, opacity=0.9] (v1) circle (5 mm);
            \filldraw[white, opacity=0.9] (v2) circle (4.2 mm);
            \filldraw[white, opacity=0.9] (v3) circle (3.5 mm);
            \filldraw[white, opacity=0.9] (v4) circle (3 mm);
            \filldraw[white, opacity=0.9] (u0) circle (3 mm);
            \filldraw[white, opacity=0.9] (u3) circle (3.5 mm);

            \filldraw[red!30, opacity=0.9] (v0) circle (6 mm);
            \filldraw[red!30, opacity=0.9] (v1) circle (5 mm);
            \filldraw[red!30, opacity=0.9] (v2) circle (4.2 mm);
            \filldraw[red!30, opacity=0.9] (v3) circle (3.5 mm);
            \filldraw[red!30, opacity=0.9] (v4) circle (3 mm);
            \filldraw[red!30, opacity=0.9] (u1) circle (3 mm);
            \filldraw[red!30, opacity=0.9] (u2) circle (3 mm);
            \filldraw[violet!50, opacity=0.9] (v3) circle (2.5 mm);
            \filldraw[violet!50, opacity=0.9] (v4) circle (2 mm);
            \filldraw[violet!50, opacity=0.9] (v2) circle (2 mm);
            \filldraw[violet!50, opacity=0.9] (u1) circle (2 mm);
            \filldraw[violet!50, opacity=0.9] (u2) circle (2 mm);
            \filldraw[blue!30, opacity=0.9] (u3) circle (3.8 mm);
            \filldraw[violet!50, opacity=0.9] (u3) circle (2 mm);
            \filldraw[blue!30, opacity=0.9] (u0) circle (3 mm);
            \filldraw[violet!50, opacity=0.9] (u0) circle (2.4 mm);


            \draw[line width=1.2 mm, color=violet!50, opacity=0.9]  (v2) -- (v3);
            \draw[line width=1.2 mm, color=violet!50, opacity=0.9]  (v4) -- (v3);
            \draw[line width=1.5 mm, color=violet!50, opacity=0.9]  (u0) -- (v3);
            \draw[line width=1.2 mm, color=violet!50, opacity=0.9]  (u0) -- (u3);
            \draw[line width=1.2 mm, color=violet!50, opacity=0.9]  (v3) -- (u1);
            \draw[line width=1.2 mm, color=violet!50, opacity=0.9]  (v3) -- (u2);

            \foreach \i in {0,1,2,...,4} {
                \node[circle,draw,scale=0.4,fill=black] at (v\i) {};
            }
        
            \node[circle,draw,fill=black,scale=0.4] at (u0)  {};
            \node[circle,draw,fill=black,scale=0.4] at (u1)  {};
            \node[circle,draw,fill=black,scale=0.4] at (u2)  {};
            \node[circle,draw,fill=black,scale=0.4] at (u3)  {};

            \foreach \i in {0,1,2,...,3} {
            \pgfmathtruncatemacro{\nexti}{\i+1}
            \draw (v\i) -- (v\nexti);
            }

           \draw (v3) -- (u0);
           \draw (v3) -- (u1);
           \draw (v3) -- (u2);
           \draw (u0) -- (u3);
           \draw (u3) -- (0.8,2.8);
           \draw (u1) -- (2.8,-2);
           \draw (u2) -- (6,-2);
           \draw (v4) -- (7, 1);

            \filldraw[red!30] (7.5,1) rectangle (8,1.5);
            \node at (8.8,1.2) {$ N_j\setminus N_i$};
            \filldraw[blue!30] (7.5,0) rectangle (8,0.5);
            \node at (8.8,0.2) {$  N_i\setminus N_j$};
            \filldraw[violet!50] (7.5,-1) rectangle (8,-0.5);
            \node at (8.8,-0.8) {$  N_j \cap N_i$};

            \node[above right] at (v0) {\( v_j \)};
            \node at ($(v2)+(4mm,4mm)$) {\( u_{k-1} \)};
            \node at ($(v3)+(3mm,4mm)$) {\( u_k \)};
        \end{tikzpicture}
        \caption{An illustration for intersections between different neighborhoods $N_i$ and  $N_j$.}
        \label{fig:connected}
    \end{figure}
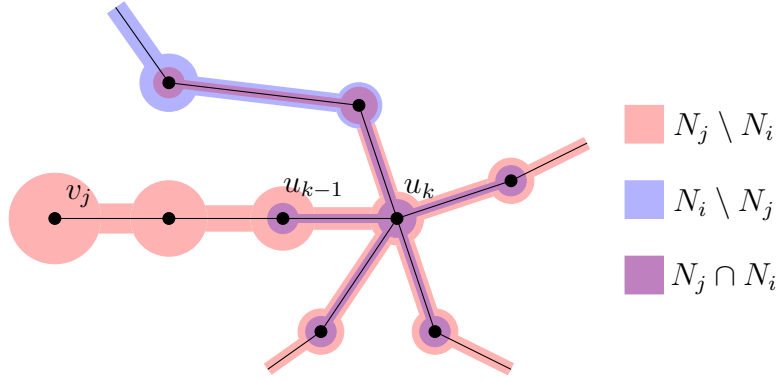

First, we show that points in the neighborhood of the same vertex are in the same connected component of $N_j \setminus N_i$, though to connect them we might need to leave the neighborhood of the respective vertex.

    \begin{prop}\label{prop:path}
        For any $u \in N_j$, the set $\bar{B}^j\left(u \right) \setminus N_i$ is contained in a connected component of $N_j \setminus N_i$.
    \end{prop}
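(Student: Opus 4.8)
The plan is to compare the local shapes of $N_j$ and $N_i$ near $u$ and to reduce the claim to the fact that an annulus with one radial slit removed stays connected. Write $T$ for the tree formed by the vertices at distance at most $g/2-1$ from $v_i$. First I would handle the trivial cases: if $d(v_j,u)>g/2-1$ there is nothing to prove; if $d(v_i,u)>g/2-1$ then, by the smallness of $\rho$, $\delta$ and the numbers $\varepsilon_\bullet$, the set $N_i$ is disjoint from $\bar B^j(u)$, so $\bar B^j(u)\setminus N_i=\bar B^j(u)\subseteq N_j$ is connected; and if $d(v_i,u)\le\min\{g/2-1,\,d(v_j,u)\}$, then comparing the radii of $\bar B^i(u)$ and $\bar B^j(u)$ (using $\varepsilon_j<\varepsilon_i$ when the two distances coincide) yields $\bar B^j(u)\subseteq\bar B^i(u)\subseteq N_i$, hence $\bar B^j(u)\setminus N_i=\emptyset$. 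So I may assume $d(v_j,u)<d(v_i,u)\le g/2-1$; in particular $u\ne v_i$ has a parent $u^{\ast}$ in $T$.

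Let $D$ be $u$ together with all descendants of $u$ in $T$, and let $T|_D$ be the (connected) subtree of $T$ it spans; note that $uu^{\ast}$ is the only edge of $T$ leaving $D$. The key point is that $d(v_i,w)>d(v_j,w)$ for every $w\in D$: the $T$-path from $v_i$ to $w$ passes through $u$, so $d(v_i,w)=d(v_i,u)+d_T(u,w)$, while $d(v_j,w)\le d(v_j,u)+d_T(u,w)<d(v_i,u)+d_T(u,w)$. As all distances involved are at most $g/2-1$, the corresponding gap in $\rho/(d(v_i,\cdot)+1)$ (respectively $\delta/(d(v_i,\cdot)+1)$) is of order $\rho/g^2$ (respectively $\delta/g^2$), which swamps the perturbations $\varepsilon_\bullet<\delta/g^2$; hence for every $w\in D$ and every edge $e$ of $T|_D$, $\bar B^i(w)$ is a proper concentric subdisk of $\bar B^j(w)$ and $\bar B^i(e)$ a proper subtube of $\bar B^j(e)$, all of them actual pieces of $N_i$, respectively $N_j$. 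Set
\[
  X=\bigcup_{w\in D}\bar B^j(w)\ \cup\ \bigcup_{e\in E(T|_D)}\bar B^j(e),
  \qquad
  Y=\bigcup_{w\in D}\bar B^i(w)\ \cup\ \bigcup_{e\in E(T|_D)}\bar B^i(e).
\]
Then $X\subseteq N_j$, $Y\subseteq N_i$; both are mild thickenings of the tree $T|_D$, hence compact, simply connected, and in fact homeomorphic to a closed disk (as in the earlier proof that $N_i$ is simply connected), and $Y$ lies in the interior of $X$ since each of its pieces is a proper subset of the matching piece of $X$. Therefore $X\setminus Y$ is homeomorphic to a half-open annulus, and in particular connected.

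Next I would identify $X\cap N_i$. Each $\bar B^i(w)$ and $\bar B^i(e)$ with $w\in D$, $e\in E(T|_D)$ is contained in $Y$; a vertex of $T$ outside $D$ that is $G$-adjacent to a vertex of $D$ must be $u^{\ast}$, since $uu^{\ast}$ is the only $T$-edge leaving $D$; and, once more by the smallness of $\rho,\delta,\varepsilon_\bullet$, no piece of $N_i$ based at a vertex or edge of $T$ that is non-adjacent to $T|_D$ can reach $X$. Hence $X\cap N_i=Y\cup S$ with $S:=\bigl(\bar B^i(uu^{\ast})\cup\bar B^i(u^{\ast})\bigr)\cap X$, a single connected region, and $S$ meets $X\setminus Y$ in a slit of the annulus that runs radially from the inner boundary $\partial Y$ to the outer boundary $\partial X$: it leaves $u$ in the direction of the edge $uu^{\ast}$, a direction in which $X$ reduces to the disk $\bar B^j(u)$, so $S$ cannot wind around. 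Since deleting one radial slit from an annulus keeps it connected, $X\setminus N_i=(X\setminus Y)\setminus S$ is connected; and then $\bar B^j(u)\setminus N_i\subseteq X\setminus N_i\subseteq N_j\setminus N_i$ (using $\bar B^j(u)\subseteq X\subseteq N_j$) shows that $\bar B^j(u)\setminus N_i$ lies in a single component of $N_j\setminus N_i$, as required.

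The part I expect to be the actual work -- the tedious, technical part the statement warns about -- is the geometric bookkeeping behind the second and third paragraphs: checking, from the prescribed smallness of $\rho$, $\delta$ and the $\varepsilon_\bullet$, that the claimed strict inclusions hold, that no ``far'' disk or tube of $N_i$ pokes into $X$ (here one uses that $\rho$ is below half the smallest vertex--vertex and vertex--nonincident-edge distances and that the drawing has no crossings), and that $S$ is genuinely a radial slit of $X\setminus Y$ rather than some more intricate subset.
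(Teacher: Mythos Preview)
Your approach is correct and genuinely different from the paper's.  The paper proves the proposition by \emph{induction} on $-d(u,v_i)$: given $p_1,p_2\in\bar B^j(u)\setminus N_i$, it projects them to $\partial\bar B^j(u)$, argues that at most one edge incident to $u$ leads ``toward'' $v_i$ (this is exactly your observation that $uu^\ast$ is the unique $T$-edge leaving $D$, applied with $D=\{u\}$), and therefore one of the two boundary arcs is crossed only by edges $e$ whose far endpoint $u_e$ has $d(u_e,v_i)>d(u,v_i)$.  Along that arc it hops across each crossing edge using the boundary segments of $\bar B^j(e)$ and invokes the induction hypothesis at $u_e$.

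You instead ``unfold'' this induction in one shot: take the whole descendant subtree $T|_D$, form the $j$- and $i$-thickenings $X\supset Y$, and reduce the question to the connectedness of $X\setminus(Y\cup S)$, an annulus with one slit removed.  The identification $X\cap N_i=Y\cup S$ is equivalent to the paper's ``at most one edge toward $v_i$'' observation, carried over the whole subtree at once.  What your approach buys is a clean global picture (no induction to track); what the paper's approach buys is that each step is completely local and avoids any appeal to topology of annuli.  Two small refinements to your write-up: (i) your inclusion of $\bar B^i(u^\ast)$ in $S$ is harmless but superfluous, since two vertex-disks centred at distinct vertices are disjoint by the choice of $\rho$; (ii) the sentence ``$S$ cannot wind around'' is best justified by noting that $S\subset\bar B(u,\rho)$ (all intersections of $\bar B^i(uu^\ast)$ with pieces of $X$ lie there, by the choice of $\delta$), and $\bar B(u,\rho)$ is far from containing a core curve of the annulus whenever $|D|>1$, while for $|D|=1$ the picture is literally a round annulus minus a straight radial strip.
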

    \begin{proof}
        We may assume $d(u,v_j) < d(u, v_i)$, as otherwise $\bar{B}^j\left(u \right) \setminus N_i$ is empty.
                
        We will use induction on $-d(u,v_i)$.

        If $d(u,v_i) \ge \ell /2$, then $u \notin N_i$, so we are done. 

        To prove the induction step, we need to show that any two points $p_1, p_2$ from $\bar{B}^j\left(u \right) \setminus N_i$ can be connected in $N_j \setminus N_i$. We may assume that $p_1$ and $p_2$ are on the boundary of $\bar{B}^j\left(u \right)$, as the segments connecting them to their respective projections from $u$ to the boundary are contained in $N_j \setminus N_i$. 

        We claim that there is at most one edge $e$ incident to $u$ for which the other endpoint $u_e$ of $e$ satisfies $d(u_e,v_i) \leq d(u,v_i)$. Consider such an edge $e$.
        In case of equality, there are two adjacent vertices in $G$ both at distance at most $\ell /2-1$ from $v_i$. The union of the two shortest paths and the edge between these two vertices creates a non-trivial closed walk of length at most $\ell /2+\ell /2-1$, which is a contradiction.
        Therefore, for each such edge, there is a shortest path going from $u$ to $v_i$ through $e$. If there were two such edges, then their union would create a non-trivial closed walk of length at most $\ell /2-1+\ell /2-1 < \ell $, which is again a contradiction.

        The points $p_1$ and $p_2$ divide the boundary circle $\partial\bar{B}^j\left(u \right)$ into two arcs. The above claim implies that for one of them, each edge $e$ crossing it satisfies $d(u,v_i) < d(u_e,v_i)$.

        This arc is divided into  subarcs by edges that cross it. We will show that each pair of subsequent subarcs are in the same connected component of $N_j \setminus N_i$. This will finish the proof of Proposition \ref{prop:path}.

        For each crossing edge $e$, both ends of $e$ are closer to $v_j$ than to $v_i$. This implies that the two boundary segments 
        $\partial  \bar{B}^j\left(e \right)\setminus \left( \bar{B}^j\left(u \right) \cup \bar{B}^j\left(u_e \right)\right)$ are contained in $N_j \setminus N_i$,
        and they connect subsequent arcs to the boundary circle of $\bar{B}^j\left(u_e\right)$. As  $d(u,v_i) < d(u_e,v_i)$, by induction, points on this boundary circle are in the same connected component of $N_j \setminus N_i$, and so the same is true for points of the subsequent arcs on $\partial \bar{B}^j\left(u\right)$.
    \end{proof}

    Now we return to the proof of Lemma \ref{lem:nonpiercing}.
    We will proceed by induction on the length of the path $P_x$, which is $k=d(u_k,v_j)$.
    
    The $k=0$ case is implied by Proposition \ref{prop:path}.

    To prove the induction step, it is sufficient to show that there exist two points $y_1 \in \bar{B}^j\left(u_k \right) \setminus N_i$ and $y_2 \in \bar{B}^j\left(u_{k-1}\right)\setminus N_i$  in the same connected component of $N_j \setminus N_i$, as Proposition \ref{prop:path} implies that points within the same disk are in the same connected component, and by induction, $y_2$ is in the same connected component as $v_j'$.
 
    Choose $y_1$ and $y_2$ from the neighborhood of the edge $e=u_{k-1}u_k$ such that $y_1 \in \partial \bar{B}^j \left(u_{k} \right) \cap \partial \bar{B}^j\left(e \right)$, $y_2 \in \partial \bar{B}^j\left(u_{k-1} \right) \cap \partial\bar{B}^j\left(e \right)$ and the segment $y_1y_2$ is on the boundary $\partial \bar{B}^j\left(e \right)$; see Figure \ref{fig:y1y2}.
    This way, the segment $y_1y_2\subset N_j \setminus N_i$ which proves that they are in the same connected component of $N_j \setminus N_i$. 

    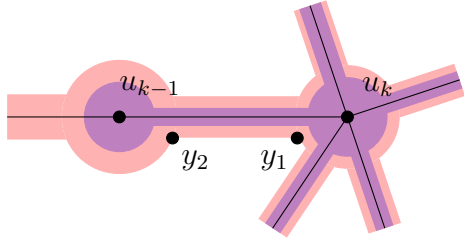
\begin{figure}
        \centering
        \begin{tikzpicture}
            \node[inner sep=0pt] (v1) at (0,0) {};
            \node[inner sep=0pt] (v2) at (1.5,0) {};
            \node[inner sep=0pt] (v3) at (4.5,0) {};
            \node[inner sep=0pt] (v4) at (6,0.5) {};
            \node[inner sep=0pt] (u0) at (4,1.5) {};
            \node[inner sep=0pt] (u1) at (3.5,-1.5) {};
            \node[inner sep=0pt] (u2) at (5,-1.5) {};

            \draw[line width=5.5 mm, color=red!30, opacity=0.9]  (v2) -- (v3);
            \draw[line width=4.5 mm, color=red!30, opacity=0.9]  (v3) -- (v4);
            \draw[line width=4.5 mm, color=red!30, opacity=0.9]  (u1) -- (v3);
            \draw[line width=4.5 mm, color=red!30, opacity=0.9]  (u2) -- (v3);
            \draw[line width=4.5 mm, color=red!30, opacity=0.9]  (u0) -- (v3);
            \draw[line width=6 mm, color=red!30, opacity=0.9]  (v1) -- (v2);

            \filldraw[white, opacity=0.9] (v3) circle (6.8 mm);
            \filldraw[white, opacity=0.9] (v2) circle (7.5 mm);

            \filldraw[red!30, opacity=0.9] (v3) circle (6.8 mm);
            \filldraw[red!30, opacity=0.9] (v2) circle (7.5 mm);
            \filldraw[violet!50, opacity=0.9] (v3) circle (5.2 mm);
            \filldraw[violet!50, opacity=0.9] (v2) circle (4.6 mm);


            \draw[line width=2.4 mm, color=violet!50, opacity=0.9]  (v2) -- (v3);
            \draw[line width=2.4 mm, color=violet!50, opacity=0.9]  (v4) -- (v3);
            \draw[line width=3 mm, color=violet!50, opacity=0.9]  (u0) -- (v3);
            \draw[line width=2.4mm, color=violet!50, opacity=0.9]  (v3) -- (u1);
            \draw[line width=2.4 mm, color=violet!50, opacity=0.9]  (v3) -- (u2);

            \foreach \i in {2,3} {
                \node[circle,draw,scale=0.4,fill=black] at (v\i) {};
            }
        
            \foreach \i in {1,2,3} {
            \pgfmathtruncatemacro{\nexti}{\i+1}
            \draw (v\i) -- (v\nexti);
            }

           \draw (v3) -- (u0);
           \draw (v3) -- (u1);
           \draw (v3) -- (u2);
           
            \node at ($(v3)+(4mm,4mm)$) {\( u_k \)};
            \node at ($(v2)+(4mm,4mm)$) {\( u_{k-1} \)};
            \node[circle,draw,scale=0.4,fill=black] (y2) at (2.2,-0.28) {};
            \node at ($(y2)+(3mm,-3mm)$) {\( y_2 \)};
            \node[circle,draw,scale=0.4,fill=black] (y1) at (3.84,-0.28) {};
            \node at ($(y1)+(-3mm,-3mm)$) {\( y_1 \)};

        \end{tikzpicture}
        \caption{Two points connected by a segment in $N_j \setminus N_i$.}
        \label{fig:y1y2}
    \end{figure}

This finishes the proof that $x$ and $v_j'$ are in the same component of $N_j \setminus N_i$ in the $i<j$ case.
The $i > j$ case of Lemma \ref{lem:nonpiercing} goes very similarly, so we omit the proof.
\end{proof}

\begin{lemma}
    The set-system $\mathcal{N}$ has the $(2,2)$-property, i.e., it is pairwise intersecting.
\end{lemma}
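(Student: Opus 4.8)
The plan is to produce, for any two indices $i\neq j$, an explicit point of $N_i\cap N_j$; the natural candidate is a vertex of $G$ sitting ``halfway'' between $v_i$ and $v_j$ along a shortest path.

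First I would invoke Observation~\ref{obs:distance}: since $v_i$ and $v_j$ both lie on the facial cycle $C$, we have $d(v_i,v_j)\le g-2$. Fix a shortest $v_i$–$v_j$ path $v_i=w_0,w_1,\dots,w_\ell=v_j$ in $G$ with $\ell=d(v_i,v_j)\le g-2$, and let $w=w_{\lfloor \ell/2\rfloor}$. Then $d(v_i,w)=\lfloor \ell/2\rfloor$ and $d(v_j,w)=\ell-\lfloor \ell/2\rfloor=\lceil \ell/2\rceil$; since $g$ is even, $g-2$ is even, so both of these quantities are at most $\lceil (g-2)/2\rceil = g/2-1$. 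Hence $w$ is a vertex of $G$ lying within distance $g/2-1$ of both $v_i$ and of $v_j$.

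By the definition of the neighborhood regions, $\bar{B}^i(w)$ is then one of the disks appearing in the union that defines $N_i$, so $\bar{B}^i(w)\subseteq N_i$; likewise $\bar{B}^j(w)\subseteq N_j$. Both of these disks are centered at $w$, so $w\in \bar{B}^i(w)\cap \bar{B}^j(w)\subseteq N_i\cap N_j$, and therefore $N_i\cap N_j\neq\emptyset$ for every pair $i\neq j$, i.e.\ $\mathcal{N}$ is pairwise intersecting. I do not expect any genuine obstacle here; the only point requiring a moment's care is that the parity of $g$ is exactly what forces $\lceil (g-2)/2\rceil = g/2-1$ rather than $g/2$, which is precisely why the reduction to even girth was carried out beforehand.
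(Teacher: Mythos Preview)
Your argument is correct and is essentially the same as the paper's own proof: both use Observation~\ref{obs:distance} to get $d(v_i,v_j)\le g-2$, then use the evenness of $g$ to find a midpoint vertex $w$ at distance at most $g/2-1$ from both $v_i$ and $v_j$, and conclude $w\in N_i\cap N_j$. Your version is simply a more explicit write-up of the paper's two-line proof.
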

\begin{proof}
    Take two arbitrary vertices $v_i, v_j \in C$. By Observation \ref{obs:distance}, $d(v_i,v_j) \leq\ell-2$. Since $\ell$ is even, this implies that there is a vertex $w$ at distance at most $\ell /2-1$ from both $v_i$ and $v_j$. By the definition of $N_i$ and $N_j$, $w \in N_i \cap N_j$, which proves our statement.
\end{proof}

Now, Theorem \ref{thm:pierce} implies that there is a absolute constant $T$ such that there exists a point set $Z=\{z_1, z_2, \dots, z_T\}$ hitting each member of $\mathcal{N}$.

Note that we may take the points of $Z$ to be vertices of $G$, as the sets in $\mathcal{N}$ are unions of neighborhoods of vertices and edges of $G$, and if a set $N_i$ contains a point in the neighborhood of an edge $e=uv$, then $N_i$ contains both $u$ and $v$. As a consequence, we have a set of $T$ vertices such that for each vertex $v_i \in C$, there is an element of $Z$ at distance at most $\ell /2-1$ from $v_i$ in $G$.

Inspired by Axenovich, Kie{\ss}le and Sagdeev \cite{AKS}, we define a partitioning of $C$ into sets $C_1, C_2, \dots, C_T$ such that $v_i \in C_j$ if and only if $d(v_i,z_j) \leq d(v_i, z_{j'})$ for each $j' \neq j$, and $d(v_i,z_j) < d(v_i, z_{j'})$ for each $j' <j$. In other words, we assign each $v_i$ to the minimum index vertex in $Z$ which is closest to it.

Next, we present two statements about the distributions of the sets $C_i$, which help us bound the number of vertices on $C$.

The following observation was used by Axenovich, Kie{\ss}le and Sagdeev \cite{AKS} as well, we include its proof for completeness:
\begin{lemma}[\cite{AKS}]\label{lem:AKS}
    If  $v_{i+1}, \dots, v_{i+j} \in C_k $ are consecutive vertices of $C$ in the same partition class, then $j \leq\ell-1$. 
\end{lemma}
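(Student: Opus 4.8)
I want to show that a run of consecutive vertices $v_{i+1},\dots,v_{i+j}$ all lying in the same class $C_k$ cannot be too long, and the natural obstruction is a short cycle. The key point is that each vertex $v_\ell$ in the class $C_k$ has $d(v_\ell,z_k)\le g/2-1$ (this is exactly what we extracted from the piercing result: every $N_i$ is hit by some $z_j$, and membership of $z_j$ in $N_i$ forces $d(v_i,z_j)\le g/2-1$). So the two extreme vertices of the run, $v_{i+1}$ and $v_{i+j}$, are each within distance $g/2-1$ of the common vertex $z_k$.

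First I would take a shortest path $Q_1$ from $v_{i+1}$ to $z_k$ and a shortest path $Q_2$ from $z_k$ to $v_{i+j}$, each of length at most $g/2-1$. Concatenating them gives a walk $W$ from $v_{i+1}$ to $v_{i+j}$ of length at most $2(g/2-1)=g-2$ inside $G$. On the other hand, the vertices $v_{i+1},v_{i+2},\dots,v_{i+j}$ form a sub-path $R$ of the facial cycle $C$, of length exactly $j-1$, also joining $v_{i+1}$ and $v_{i+j}$. If $j-1 > g-2$, i.e. $j>g-1$, then $R$ is strictly longer than $W$, so $R$ and $W$ are distinct $v_{i+1}$–$v_{i+j}$ walks; their union contains a closed walk, hence a cycle, and I need this cycle to have length less than $g$ to contradict the girth assumption.

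The main obstacle is the bookkeeping to guarantee a genuine short cycle rather than a degenerate closed walk. The cleanest route: the closed walk $R \cup W$ has total length at most $(j-1)+(g-2)$, but that is too big on its own, so instead I would argue more carefully. Since $R$ is a path along $C$ and $W$ has length $\le g-2 < j-1$, the two paths cannot coincide, so there is an edge of $R$ not used by $W$; following $R$ from $v_{i+1}$ until it first leaves $W$ and then returning along $W$ produces a cycle whose $R$-part plus $W$-part together has length at most (length of $W$) $+$ (the portion of $R$ traversed) $\le (g-2) + (g-2) = 2g-4$, still not obviously below $g$. So I'd sharpen: actually I only need one short cycle, and the standard trick is that a facial path of length $\ell$ between two vertices at graph-distance $\le d$ yields a cycle of length $\le \ell + d$ unless the facial path itself is a shortest path; taking the shortest path inside $W$ together with the facial sub-path between the last shared vertices gives a cycle, and by minimality its length is at most $1 + (g-2)$ in the relevant configuration — this is the computation I expect to be fiddly and where one must be careful about which sub-path of $R$ is actually used. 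In any case, choosing $j > g-1$ forces such a cycle of length $< g$ (in fact one can pin it down to length at most $g-1$ by using that $v_{i+1}$ and $v_{i+j}$ are at distance $\le g-2$ from each other along $C$ as well, by Observation \ref{obs:distance}), contradicting the girth hypothesis, so $j \le g-1$.
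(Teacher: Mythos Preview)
Your approach has a genuine gap: you only extract information about the two \emph{endpoints} $v_{i+1}$ and $v_{i+j}$ of the run, namely that each lies within graph distance $g/2-1$ of $z_k$, and hence that $d(v_{i+1},v_{i+j})\le g-2$. But this last inequality holds for \emph{every} pair of vertices on the facial cycle $C$ (it is exactly the observation you invoke at the end), regardless of how far apart they sit along $C$; so nothing derivable from it alone can possibly bound $j$. Were your argument valid it would apply equally well to an arbitrary pair of vertices of $C$ and force $|C|\le g-1$, contradicting the lower bounds $f_{max}(g)\ge 2g-3$. Concretely, combining the facial subpath $R$ of length $j-1$ with a shortest $v_{i+1}$--$v_{i+j}$ path of length at most $g-2$ does yield a closed walk containing a cycle, but there is no reason that cycle has length below $g$: the two paths may share only their endpoints, producing a cycle of length $(j-1)+d(v_{i+1},v_{i+j})$, which for $j=g$ is already at least $g$. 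Your successive attempts to sharpen this never close the gap, and they cannot, because the intermediate vertices $v_{i+2},\dots,v_{i+j-1}$ are never used.

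The paper's proof works precisely by using every vertex of the run. For consecutive $v_h,v_{h+1}\in C_k$, the two shortest paths to $z_k$ together with the edge $v_hv_{h+1}$ form a closed walk of length at most $2(g/2-1)+1=g-1<g$, hence a trivial one, which forces $|d(v_h,z_k)-d(v_{h+1},z_k)|=1$. A similar local argument rules out any index $h$ with $d(v_{h-1},z_k)=d(v_{h+1},z_k)=d(v_h,z_k)-1$, so the sequence $h\mapsto d(v_h,z_k)$ is strictly unimodal with steps $\pm 1$ and values in $\{0,1,\dots,g/2-1\}$; such a sequence has length at most $2(g/2-1)+1=g-1$.
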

\begin{proof}
    Since for each $h$, the union of the shortest paths from $v_h$ to $z_k$ and from $v_{h+1}$ to $z_k$, and the edge $v_hv_{h+1}$ forms a closed walk of length at most $2(\ell /2-1)+1= \ell -1$, this walk has to be trivial, and thus $|(d(v_h,z_k)-d(v_{h+1},z_k)|=1$ must hold.

    Additionally, if there was an index $h$ such that $d(v_{h-1},z_k)+1= d(v_{h},z_k)  = d(v_{h+1},z_k)+1$, then there would be a non-trivial closed walk of length at most $2(\ell /2-1)=\ell-2$ through $v_h$ and $z_k$, which is a contradiction.

    As a corollary, the function $h \mapsto -d(v_h,z_k)$ is strictly unimodal.
    Since $0\le d(v_h,z_k)\le \ell /2-1$, it can take at most $2(\ell /2-1)+1=\ell-1$ different values, proving our result.
\end{proof}

\begin{lemma}\label{lem:DSsetup}
   There cannot exist indices $h_1 <h_2<h_3<h_4$, for which  $v_{h_1}, v_{h_3} \in C_i$ and $v_{h_2}, v_{h_4} \in C_j$ for some $i \neq j$.
\end{lemma}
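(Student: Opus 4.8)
The plan is to derive a contradiction from the existence of such $h_1<h_2<h_3<h_4$ with $v_{h_1},v_{h_3}\in C_i$ and $v_{h_2},v_{h_4}\in C_j$, $i\neq j$. For $a\in\{1,3\}$ fix a shortest path $Q_a$ from $v_{h_a}$ to $z_i$ in $G$, and for $b\in\{2,4\}$ a shortest path $Q_b$ from $v_{h_b}$ to $z_j$. Let $\pi_{13}=Q_1\cup Q_3$ and $\pi_{24}=Q_2\cup Q_4$: these are connected subgraphs of $G$, the first containing $v_{h_1}$ and $v_{h_3}$, the second containing $v_{h_2}$ and $v_{h_4}$. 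The argument splits into (i) showing that $\pi_{13}$ and $\pi_{24}$ must share a vertex $w$, and (ii) showing that such a $w$ is incompatible with the way the classes $C_1,\dots,C_T$ were defined.

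For step (i) I would use that $C$ is a facial cycle, so the face $F$ it bounds is disjoint from $G$. Since $v_{h_1},v_{h_2},v_{h_3},v_{h_4}$ lie on the boundary $C$ of the open disk $F$ in this cyclic order, we may add a new vertex $x$ inside $F$ together with four pairwise non-crossing arcs inside $F$ from $x$ to $v_{h_1},v_{h_2},v_{h_3},v_{h_4}$, without introducing crossings; call the resulting plane graph $G^{+}$, and choose the arcs so that their cyclic order around $x$ is $v_{h_1},v_{h_2},v_{h_3},v_{h_4}$. Then $\pi_{13}$ together with the arcs $xv_{h_1},xv_{h_3}$ contains a cycle $\sigma_{13}$ through $x$, and $\pi_{24}$ together with $xv_{h_2},xv_{h_4}$ contains a cycle $\sigma_{24}$ through $x$. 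At $x$ the two edges of $\sigma_{13}$ and the two edges of $\sigma_{24}$ interleave in the rotation, so the Jordan curves $\sigma_{13}$ and $\sigma_{24}$ cross at $x$; since two closed curves in the plane cross an even number of times, they must cross again at a point $\neq x$, which in the plane graph $G^{+}$ can only be a vertex $w$. (If $\sigma_{13}$ and $\sigma_{24}$ instead share an edge, take its endpoints.) As $w\neq x$ lies on $\sigma_{13}$ it is a vertex of $\pi_{13}$, and likewise of $\pi_{24}$.

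For step (ii): since $w\in\pi_{13}=Q_1\cup Q_3$, it lies on a shortest path from some $v_{h_a}\in\{v_{h_1},v_{h_3}\}\subseteq C_i$ to $z_i$, hence $d(v_{h_a},w)+d(w,z_i)=d(v_{h_a},z_i)$; similarly $w$ lies on a shortest path from some $v_{h_b}\in\{v_{h_2},v_{h_4}\}\subseteq C_j$ to $z_j$, so $d(v_{h_b},w)+d(w,z_j)=d(v_{h_b},z_j)$. Adding the triangle inequalities $d(v_{h_a},z_j)\le d(v_{h_a},w)+d(w,z_j)$ and $d(v_{h_b},z_i)\le d(v_{h_b},w)+d(w,z_i)$ and substituting the two identities gives
\[ d(v_{h_a},z_j)+d(v_{h_b},z_i)\le d(v_{h_a},z_i)+d(v_{h_b},z_j). \]
But $v_{h_a}\in C_i$ forces $d(v_{h_a},z_i)\le d(v_{h_a},z_j)$ and $v_{h_b}\in C_j$ forces $d(v_{h_b},z_j)\le d(v_{h_b},z_i)$, so the two nonnegative quantities $d(v_{h_a},z_j)-d(v_{h_a},z_i)$ and $d(v_{h_b},z_i)-d(v_{h_b},z_j)$ sum to at most $0$ and hence both vanish. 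Thus $v_{h_a}$ is equidistant from $z_i$ and $z_j$; since it was placed in $C_i$, the minimal-index tie-break in the definition of the classes forces $i<j$. Symmetrically, $v_{h_b}$ is equidistant from $z_i$ and $z_j$ and was placed in $C_j$, forcing $j<i$. This contradiction proves the lemma.

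I expect step (i) to be the delicate part: one has to set up the crossing-parity argument carefully — in particular handle the case where $\sigma_{13}$ and $\sigma_{24}$ share edges, and verify that the rotation at the auxiliary vertex $x$ genuinely interleaves the two cycles. Step (ii) is then a short computation with graph distances together with the definition of the partition $C_1,\dots,C_T$, and does not use anything about $g$.
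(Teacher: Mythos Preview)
Your proof is correct and follows essentially the same approach as the paper: both arguments use planarity of $G$ (and the fact that $C$ bounds a face) to force $Q_1\cup Q_3$ and $Q_2\cup Q_4$ to share a vertex $w$, and then derive a contradiction from the distance/tie-break definition of the partition. The paper asserts the existence of $w$ in one line and then disposes of step~(ii) by a direct case split on whether $d(w,z_i)\le d(w,z_j)$ or $d(w,z_i)>d(w,z_j)$ (after assuming $i<j$ without loss of generality); your auxiliary-vertex parity argument makes step~(i) explicit, and your symmetric inequality in step~(ii) is an equivalent reformulation of their case analysis.
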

\begin{proof}
    Assume without loss of generality that $i<j$.
    Let $P_a$  be a shortest path between $z_i$ and $v_{h_a}$ for $a \in \{1,3\}$, and similarly, let $P_b$ be a shortest path between $z_j$ and $v_{h_b}$ for $b\in \{2,4\}$.
    As $h_1 <h_2<h_3<h_4$, there must be an $a \in \{1,3\}$ and $b\in \{2,4\}$ for which $P_a$ and $P_b$ intersect in a vertex $w$. 
   If $d(w,z_i) \leq d(w,z_j)$, then $d(v_{h_b},z_i) \leq d(v_{h_b},z_j)$ which contradicts the definition of the sets $C_i,C_j$. Similarly, if $d(w,z_i) > d(w,z_j)$, then $v_{h_a}$ should be in $C_j$ rather than $C_i$.
\end{proof}

 We use a theorem of Davenport and Schinzel \cite{D-S}:
\begin{Th}[\cite{D-S}]\label{colorlemma}
     If we color the integers $\{1,2, \dots, n\}$ with $t$ colors such that  neighboring indices have different colors, and there are no indices $h_1 <h_2<h_3<h_4$ and colors $c_i, c_j$ such that $h_1, h_3$ have color $c_i$ and $h_2, h_4$ have color $c_j$, then $n \leq 2t-1$.
\end{Th}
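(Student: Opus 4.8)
The plan is to induct on the number of colours $t$, reading the colouring as a sequence $a_1,\dots,a_n$ with $a_i\neq a_{i+1}$ for all $i$ and with no four-term alternation $a_{h_1}=a_{h_3}\neq a_{h_2}=a_{h_4}$ for $h_1<h_2<h_3<h_4$. We may assume every one of the $t$ colours is actually used. The base case $t=1$ is immediate: the no-equal-neighbours condition forces $n\le 1=2\cdot 1-1$.

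For the inductive step, let $c=a_n$ be the colour of the last element. If $c$ occurs only at position $n$, delete it: the sequence $a_1,\dots,a_{n-1}$ still satisfies both hypotheses and now uses only $t-1$ colours, so by induction $n-1\le 2(t-1)-1$, hence $n\le 2t-2<2t-1$. Otherwise $c$ occurs at least twice; let $q$ be its penultimate occurrence, so $a_q=a_n=c$ while $c$ does not occur among $a_{q+1},\dots,a_{n-1}$. Since consecutive terms differ, $n\ge q+2$, so this ``window'' $a_{q+1},\dots,a_{n-1}$ is nonempty.

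The key claim is that every colour occurring in this window occurs \emph{nowhere else} in the sequence. Indeed, suppose some colour $d$ occurs at a position $j$ with $q<j<n$ and also at a position $j'$ outside $(q,n)$. As $a_q=a_n=c\neq d$, necessarily $j'<q$; but then $j'<q<j<n$ and $a_{j'}=d,\ a_q=c,\ a_j=d,\ a_n=c$ is a forbidden alternation. This proves the claim. Consequently, writing $w=n-1-q\ge 1$ for the length of the window and $t_1$ for the number of colours appearing in it, the colours inside the window are disjoint from the colours occurring in the contiguous block $a_1,\dots,a_q$; since that block contains $c$ (at position $q$), it uses exactly the remaining $t-t_1$ colours, with $t-t_1\ge 1$, and also $t_1\ge 1$. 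Both blocks are contiguous subsequences, hence satisfy the two hypotheses, and each uses strictly fewer than $t$ colours, so the induction hypothesis gives $w\le 2t_1-1$ and $q\le 2(t-t_1)-1$. Adding, $n=q+w+1\le 2(t-t_1)-1+2t_1-1+1=2t-1$, completing the induction.

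The main obstacle is the structural claim that the colours strictly between the last two occurrences of $a_n$ form an ``isolated'' block: this is the only place where the forbidden-pattern hypothesis is used, and once it is in hand, splitting the sequence into two contiguous pieces with complementary, strictly smaller colour sets and combining the bounds via $q+w+1\le 2t-1$ is routine. (This is the classical fact that Davenport--Schinzel sequences of order $2$ on $t$ symbols have length at most $2t-1$.)
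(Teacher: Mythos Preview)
Your proof is correct. The paper itself does not prove this statement at all; it simply quotes it as a classical result of Davenport and Schinzel and applies it. So there is no ``paper's proof'' to compare against. Your induction on $t$---splitting the sequence at the penultimate occurrence of the last colour and using the forbidden alternation to show the intervening window uses a disjoint set of colours---is the standard short argument for the bound $\lambda_2(t)=2t-1$, and every step checks out: the disjointness claim is exactly where the $abab$-freeness enters, and the arithmetic $q+w+1\le 2(t-t_1)-1+2t_1-1+1=2t-1$ is clean.
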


Color vertices of $C_i$ with color $c_i$ for each $i$, and contract each monochromatic interval, thus getting an interval colored with $T$ colors which satisfies the conditions of Lemma \ref{colorlemma}. Each contracted interval had length at most $\ell -1$, therefore
the length of $C$ must satisfy $|C| \leq (2T-1)(\ell-1)$, which completes the proof of Theorem \ref{thm:faces2} and thus of Theorem \ref{thm:faces}. \qed

\newpage
\section{Face lengths on other surfaces}\label{sec:facesgen}

In this section, we show the following analogue of Theorem \ref{thm:faces} for graphs drawn on orientable surfaces:

\facesgen*

The proof will proceed similarly, with the main difference being the usage of a new result of Raman and Singh \cite{RS25}  on supports for subgraph systems on surfaces.

Instead of considering blow-ups of subgraphs and using their non-piercing property, we will consider the subgraphs themselves, with the non-piercing property exchanged to \emph{cross-freeness}, which was introduced by Raman and Singh \cite{RS25}:

\begin{Def}
For a fixed graph $H$ drawn on a surface and a pair of subgraphs $H_1,H_2$, 
the reduced graph $R(H_1,H_2)$ is the graph obtained by contracting all edges, both of
whose end-points are in $H_1 \cap H_2$.

Consider two subgraphs $H_1, H_2$ and a vertex $v$, and let $\tilde{v}$ be the image of $v$ in $R(H_1, H_2)$. 
Subgraphs $H_1$ and $H_2$ are said to be crossing at a vertex $v$ if there are edges $e_1,e_2,e_3,e_4 $ in $R(H_1,H_2)$ incident to $\tilde{v}$ in this circular order, with $e_1,e_3 \in H_1\setminus H_2$ and $e_2,e_4 \in H_2\setminus H_1$. \\
A set of subgraphs $\mathcal{H}=\{H_1, H_2, \dots, H_k \}$ without crossing pairs is called a \emph{cross-free} (or \emph{non-crossing}) graph system.
\end{Def}

We first show the following analogue of Theorem \ref{thm:pierce}:

\begin{Def}
    An $r$-neighbourhood around a vertex $u$ in a graph $G$ is the subgraph of $G$ induced by the vertices at distance at most $r$ from $u$ in $G$. 
\end{Def}

\begin{lemma}\label{lem:covergeneral}
    Let $G$ be a graph drawn on a surface with genus $g$ and girth larger than $2r+1$, and let $\mathcal{H}=\{H_1, H_2, \dots, H_k\}$ be a pairwise intersecting family of its $r$-neighbourhoods. Then $\mathcal{H}$ is a cross-free family and it can be pierced with at most $C(g)$ points for some $C(g)$ constant only depending on $g$.
\end{lemma}
\begin{proof}
We again show a bound for the dual VC-dimension and show a $(p,q)$-property for $\mathcal{H}$, and we conclude using Matou\v{s}ek's theorem.

\begin{obs}\label{obs:tree}
 $H_i$ is a tree for each $i$.
\end{obs}
\begin{proof}
 If $H_i$ contained a cycle, it would have length at most $2r+1$, contradicting the girth constraint. 
\end{proof}

\begin{lemma}
	The dual VC-dimension of $\mathcal{H}$ is less than $\frac{7+\sqrt{49+48g}}{2}$.
\end{lemma}
\begin{proof}
    Our proof starts similarly to Lemma \ref{lem:VCdim4}. Suppose, for a contradiction, that we have subgraphs $H_1, \dots, H_\varphi$ for $\varphi=\lceil\frac{7+\sqrt{49+48g}}{2}\rceil$ exhibiting this dimension, and choose points $x_i \in H_i \setminus \bigcup_{k\neq i} H_k$, $x_{i,j} \in \left(H_i\cap H_j\right) \setminus \bigcup_{k \neq i,j} H_k$. Take simple curves $\gamma_{i,j}$ to be paths from $x_i$ to $x_{i,j}$ inside $H_i$, and we again get an embedding of $K_\varphi$ on the surface.
    More precisely, to get a drawing where any two edges meet a finite number of times, we need to make a small perturbation to make the curves $\gamma_{i,j}$ edge-internally disjoint, i.e., any two curves can intersect only at a small neighborhood of a vertex.    
    Moreover, we can also assume that for any $i$ the curves $\{\gamma_{i,j}\}_j$ are pairwise disjoint (apart from all starting from $x_i$), as otherwise we could ``uncross'' such intersections.
    As the minimal genus of a surface into which $K_\varphi$ can be embedded is $\lceil\frac{(\varphi-3) (\varphi-4)}{12}\rceil > g $ \cite{RY68}, there will be two independent {intersecting} edges, 
    without loss of generality, $\gamma_{1,2}$ and $\gamma_{3,4}$.

    Suppose that $H_1$ and $H_3$ are neighbourhoods around $y_1$ and $y_3$, respectively, and let $z$ be a vertex in the small vicinity of an intersection point of $\gamma_{1,2}$ and $\gamma_{3,4}$. 

    By Observation \ref{obs:tree}, $H_1$ is a tree.
   Therefore, since the vertices $x_1, z, x_{1,2}$ lie on a path in this order, either $d(x_1,y_1)=d(x_1,z)+d(z,y_1)$ or $d(x_{1,2},y_1)=d(x_{1,2},z)+d(z,y_1)$. 
   Similarly, either $d(x_3,y_3)=d(x_3,z)+d(z,y_3)$ or $d(x_{3,4},y_3)=d(x_{3,4},z)+d(z,y_3)$.

    First, assume that the first equality holds both times.  
    By symmetry, we may assume $d(x_1,z) \geq d(x_3,z)$.
   Then,
    \[
    d(x_3,y_1) \le d(x_3,z)+d(z,y_1) \leq d(x_1,z)+d(z,y_1) = d(x_1,y_1),
    \]
    which implies $x_3 \in H_1$ as $x_1 \in H_1$, a contradiction since $x_3 \in H_3 \setminus H_1$.

     We also have $x_{1,2} \in H_1 \setminus H_3$ and  $x_{3,4} \in H_3 \setminus H_1$, so it does not matter which of the equalities hold, we could repeat the same proof with $x_{1,2}$ instead of $x_1$ and/or $x_{3,4}$ instead of $x_3$.
\end{proof}

\begin{prop}
 The subgraph-system $\mathcal{H}=\{H_1,H_2, \dots, H_k\}$ is cross-free.
\end{prop}
\begin{proof}
    Assume that two subgraphs, without loss of generality, $H_1$ and $H_2$, cross at a vertex $z$.  This implies the existence of vertices $x_1, x_2 \in H_1 \setminus H_2$ and $x'_1, x'_2 \in H_2 \setminus H_1$ such that $z$ lies on the unique path between $x_1$ and $x_2$ in $H_1$, and it also lies on the unique path between $x_1'$ and $x_2'$ in $H_2$,
    and we reach a contradiction the same way as in the proof of the previous lemma. 
\end{proof}

Recall that for a set-system $\FF$ on a set of elements $P$, we define the dual intersection hypergraph $\HH(\FF, P)$ such that the vertices correspond to members of $\FF$, while hyperedges correspond to elements of $P$ such that for every $p \in P$ the vertex set $H_p=\{F\in \FF: p \in F\}$ forms a hyperedge.
The Delaunay graph $\mathcal{D(\FF)}$ is the subgraph of $\HH(\FF, P)$ containing only the hyperedges with exactly two vertices---so each edge $F_1F_2$ is witnessed by some exclusive intersection point $x\in F_1\cap F_2$ such that no $F\in \FF\setminus \{F_1,F_2\}$ contains $x$.

Raman and Singh \cite{RS25} have shown that for any non-crossing graph system $\FF$ on a graph $G_\FF$ of genus $g$, there exists a \emph{dual support} of $\FF$ on genus at most $g$, that is, there exists a graph $G_\FF'$ of genus $g$ such that for each  $p \in V(G_\FF)$, $H_p$ is a connected subgraph of $G_\FF'$.

We have the following consequence for the Delaunay graph $\mathcal{D(\HH)}$, where each exclusive intersection point corresponds to an edge.

\begin{cor}[of Raman and Singh \cite{RS25} ]\label{cor:RS}
For any non-crossing graph system $\FF$ on a graph $G_\FF$ of genus $g$, 
 the Delaunay graph $\mathcal{D}(\mathcal{\FF})$ can be drawn on a surface with genus $g$, therefore it can have at most $3|F| - 6  + 6g$ edges.
\end{cor}

\begin{lemma}\label{lem:22pqgeneral}
    If $\FF$ is a family non-crossing subgraphs with the $(\nu+1,2)$-property, then $\FF$ also has the $(p,q)$-property for any $p>3e(\nu+1) \nu q+2gq +1$ for any $q\ge 2$, where $e=2.71...$ is Euler's number.
\end{lemma}

\begin{proof}
    The proof analogous to that of Lemma \ref{lem:22pq}, with the only difference being a different upper bound on the number of edges in the Delaunay graph.
    
    Fix some $q\ge 2$ and $p>3e(\nu+1)\nu q+1$, and suppose for a contradiction that there exists a subfamily $\GG \subset \FF $ of size $p$ without a common intersection. 
   Delete each set from $\GG$ with probability $1-\frac{1}{q}$ to obtain the subsubfamily $\HH$. The average number of remaining sets is $\frac{p}{q}$, so in expectation the Delaunay graph $\mathcal{D(\HH)}$ has this many vertices.
   The probability that two different intersecting sets of $\GG$ span an edge of $\mathcal{D(\HH)}$ is at least $\frac{1}{q^2} \left( 1 - \frac{1}{q} \right)^{q-3} > \frac{1}{e} \frac{1}{q^2}$, as any point in the intersection of two sets is contained in at most $q-3$ other sets; if these are all deleted, then we get a Delaunay edge.  As from any $\nu+1$ sets in $\GG$, there are two that intersect, the number of intersecting set pairs in $\GG$ is at least $\frac{\binom{p}{\nu+1}}{\binom{p-2}{\nu-1}} = \frac{p(p-1)}{(\nu+1)\nu}$.
   Consequently, the expected number of edges is at least $\frac{1}{e} \frac{p(p-1)}{(\nu+1)\nu q^2}$.
   As $\GG$ and all its subsystems are non-crossing, by Corollary \ref{cor:RS}, we get the following inequality for the expected number of edges after deletion: $\frac{1}{e} \frac{p(p-1)}{(\nu+1)\nu q^2} \leq 3\frac{p}{q} + 6g$. 
   Reordering, we get $p(p - (3e (\nu +1) \nu q+1)) \leq 6eg (\nu +1) \nu q^2$, which contradicts $p>3e(\nu+1) \nu q+2gq +1$. 
\end{proof}

As in the proof of Theorem \ref{thm:pierce}, we conclude using Theorem \ref{pqthm}.
\end{proof}

\begin{proof}[Proof of Theorem \ref{thm:facesgen}]
Fix a graph $G$ with girth at least $\ell$ drawn on a surface with genus $g$, and fix a face of $G$ with facial cycle $C=v_1v_2 \dots v_m$. Similarly as in the proof of Theorem \ref{thm:faces}, we may assume that $l$ is even.

For every vertex $v_i \in C$, define its $\left(\frac{\ell}{2}-1\right)$-neighbourhood $N_i=\{\ v \in V(G): d(v,v_i)\leq \frac{\ell}{2}-1 \}\cup\{e \in E(G): d(e,v_i) \leq \frac{\ell}{2} -1\}$.

	\begin{lemma}
	The subgraph-system $\mathcal{N}$ has the $(2,2)$-property, i.e., it is pairwise intersecting.
\end{lemma}
\begin{proof}
	Take two arbitrary vertices $v_i, v_j \in C$. By Observation \ref{obs:distance}, $d(v_i,v_j) \leq\ell-2$. Since $\ell$ is even, this implies that there is a vertex $w$ at distance at most $\ell /2-1$ from both $v_i$ and $v_j$. By the definition of $N_i$ and $N_j$, $w \in N_i \cap N_j$, which proves our statement.
\end{proof}

Now, we can use Lemma \ref{lem:covergeneral} with $r = \frac{\ell}{2}-1$, getting $C(g)$ points $z_1, z_2, \dots, z_{C(g)}$ covering all neighbourhoods $N_i$.

Again, we define a partioning of the vertices of the cycle such that $v_i \in C_j$ if and only if $d(v_i,z_j) \leq d(v_i, z_{j'})$ for each $j' \neq j$, and $d(v_i,z_j) < d(v_i, z_{j'})$ for each $j' <j$.

As Lemma \ref{lem:AKS} does not use any topological properties of the surface, it still holds here. In contrast, we need to show a substitute for Lemma \ref{lem:DSsetup}: 
\begin{lemma}\label{lem:DSsetupgeneral}
	There cannot exist indices $i_1 <j_1<i_2<j_2<\dots<j_b$, for which  $v_{i_1}, v_{i_2}, \dots, v_{i_b} \in C_i$ and $v_{j_1}, v_{j_2}, \dots, v_{j_b} \in C_j$ for some $i \neq j$, where $b=4g+3$.
\end{lemma}
\begin{proof}
	Assume that there exist two such sets $C_i,C_j$. We will show that this implies the the existence of a proper embedding of $K_{3,b}$ on the vertex set $\{z_i,z_j,z\}\cup\{v_{i_i}, v_{i_2}, \dots, v_{i_b}\}$, where $z$ is an arbitrary point of the empty face. 
	
	For $h \in [b]$, let the edge $v_{i_h}z_i$ of the embedding be the shortest path between $v_{i_h}$ and $z_i$ in the graph, and let the edge $v_{i_h}z_j$ of the embedding be concatenation of the shortest path between $v_{j_h}$ and $z_i$, and the path between $v_{i_h}$ and $v_{j_h}$ on $C$.
    Note that each path from $z_i$ is disjoint from each path from $z_j$, and the paths from $z_i$ (respectively, from $z_j$) can be made internally pairwise disjoint with a small perturbation.    
    Finally, $z$ can be connected to vertices $v_{i_h}$ with arbitrary, pairwise non-intersecting curves. 
	
	It is easy to see that these curves yield a proper embedding of a $K_{3,b}$ graph on the surface, which implies $b \leq 4g+2$, as the genus of the complete bipartite graph $K_{p,q}$ is $\lceil \frac{(p-2)(q-2)}{4} \rceil$ \cite{R1965}.
	
\end{proof}

Here, we use the general form of the Davenport-Schinzel theorem \cite{D-S}: Let $\lambda_b(t) = t \cdot 2^{\frac{1}{s!} \alpha(t)^s+O\left(\alpha(t)^{s-1}\right)}$, where $s=\lfloor \frac{b-2}{2} \rfloor$ and $\alpha(t)$ is the inverse Ackermann function.
\begin{Th}[\cite{D-S}]
     If we color the integers $\{1,2, \dots, n\}$ with $t$ colors such that  neighboring indices have different colors, and there are no indices $h_1 <h_2<\dots < h_{2b-1} <h_{2b}$ and colors $c_i, c_j$ such that $h_1, h_3, \dots, h_{2b-1}$ have color $c_i$ and $h_2, h_4, \dots, h_{2b}$ have color $c_j$, then $n \leq \lambda_b(t)$.
\end{Th}

Again, we color vertices of $C_i$ with color $c_i$ for each $1 \leq i \leq C(g)$, and contract each monochromatic interval, thus getting an interval colored with $C(g)$ colors satisfying the conditions of Lemma \ref{lem:DSsetupgeneral}. By Lemma \ref{lem:AKS}, each contracted interval had length at most $\ell-1$, therefore $|C| \leq \lambda_{4g+3}(C(g)) (\ell-1)$.
\end{proof}

\subsection*{Concluding remarks and open questions}

There are several questions left open, from which we would like to highlight a few.

\begin{quest}\label{quest1}
    Is it true that every non-piercing pairwise intersecting family can be pierced with 4 points?
    What about pseudo-disks?
\end{quest}

If yes, the method of our proof for Theorem \ref{thm:faces} would give an upper bound of $7\ell-7$ for $f_{max}(\ell)$, improving the current best bound $8\ell-13$ of Axenovich, Kie{\ss}le, Sagdeev, and Zhukovskii \cite{AKS}.

\begin{quest}
    What is the exact constant for $f_{max}(\ell)?$
    Is it possible to improve the constructions in \cite{AKS}?
    What is the exact characterization of maximal $\mathcal{C}_{<\ell}$-free graphs?
\end{quest}

\subsubsection*{Acknowledgement}

We would like to thank Arsenii Sagdeev for discussion, in particular, to pointing out that a positive answer to Question \ref{quest1} would give a better upper bound for $f_{max}(\ell)$.
He also pointed out that the methods of their paper and ours might be combined to obtain an improved upper bound, but we leave this for future research.

\newpage

\nocite{*}
\bibliographystyle{plain}
\bibliography{template}

\end{document}